\xpatchcmd{\MaketitleBox}{\hrule}{}{}{}
\xpatchcmd{\MaketitleBox}{\hrule}{}{}{}
\g@addto@macro\normalsize{%
  \setlength\abovedisplayskip{4pt}
  \setlength\belowdisplayskip{4pt}
  \setlength\abovedisplayshortskip{4pt}
  \setlength\belowdisplayshortskip{4pt}
}
\numberwithin{equation}{section}
\crefname{section}{Section}{Sections}
\crefname{subsection}{Subsection}{Subsections}
\crefname{condition}{Condition}{Conditions}
\crefname{hypothesis}{Hypothesis}{Conditions}
\crefname{assumption}{Assumption}{Assumptions}
\crefname{lemma}{Lemma}{Lemmas}
\crefname{definition}{Definition}{Definitions}
\numberwithin{equation}{section}
\newtheorem{theorem} {Theorem}[section]
\newtheorem{lemma}{Lemma}[section]
\newtheorem{example}{Example}[section]
\newtheorem{counter example}{Counter Example}[section]
\newtheorem{remark} {Remark}[section]
\newtheorem{definition} {Definition}[section]
\def\CC{{\rm \kern.24em \vrule width.02em height1.4ex depth-.05ex \kern-.26emC}}
\def\TagOnRight
\def\AA{{it I} \hskip-3pt{\tt A}}
\def\QQ{\rlap {\raise 0.4ex \hbox{$\scriptscriptstyle |$}} {\hskip -0.1em Q}}
\newcommand{\vo}{\vec{o}\@ifnextchar{^}{\,}{}}
\def\YYint#1#2#3{{\setbox0=\hbox{$#1{#2#3}{\iint}$}
    \vcenter{\hbox{$#2#3$}}\kern-.50\wd0}}
\def\XXint#1#2#3{{\setbox0=\hbox{$#1{#2#3}{\int}$}
    \vcenter{\hbox{$#2#3$}}\kern-.50\wd0}}
\def\namedlabel#1#2{\begingroup
   \def\@currentlabel{#2}%
   \label{#1}\endgroup
}
\newcommand{\rmh}[1]{\mathpalette{\raisem@th{#1}}}
\newcommand{\raisem@th}[3]{\hspace*{-1pt}\raisebox{#1}{$#2#3$}}
\newcommand{\descref}[2]{\hyperref[#1]{\textnormal{\textcolor{black}{(}\textcolor{blue}{\bf #2}\textcolor{black}{)}}}}
\newcommand{\dref}[2]{\hyperref[#1]{\textcolor{black}{(}\textcolor{blue}{\bf #2}\textcolor{black}{)}}}
\newcommand{\be} {\begin{eqnarray}}
\newcommand{\ee} {\end{eqnarray}}
\newcommand{\Bea} {\begin{eqnarray*}}
\newcommand{\Eea} {\end{eqnarray*}}
\newcommand{\p}  {\prime}
\newcommand{\la} {\lambda}
\newcommand{\f}{\infty}
\newcommand{\R}{\mathbb{R}}
\newcommand{\sgn}{\mathop\mathrm{sgn}}
\newcommand{\pa}{\partial}
\newcommand{\D}{\Delta}
\newcommand{\Z}{\mathbb{Z}}
\newcommand{\sumi}{\sum_{i \in \Z}}
\newcommand{\BV}{\textrm{BV}}
\newcommand{\mM}{\mathcal{M}}
\newcommand{\jph}{j+1/2}
\newcommand{\jmh}{j-1/2}
\newcommand{\iph}{i+1/2}
\newcommand{\imh}{i-1/2}
\newcommand{\nph}{n+1/2}
\newcommand{\mP}{\mathcal{P}}
\newcommand{\mQ}{\mathcal{Q}}
\newcommand{\mK}{\mathcal{K}}
\newcommand{\ms}{\mathcal{S}}
\newcommand{\mG}{\mathcal{G}}
\DeclareMathOperator{\spt}{supp}
\DeclareMathOperator{\loc}{loc}
\DeclareMathOperator{\TV}{TV}
\DeclareMathOperator{\TVD}{TVD}
\DeclareMathOperator{\CFL}{CFL}
\newcommand{\norma}[1]{{\left\|#1\right\|}}
\newcounter{whitney}
\newcounter{ineqcounter}
\def\ps@pprintTitle{%
\let\@oddhead\@empty
\let\@evenhead\@empty
\def\@oddfoot{}%
\let\@evenfoot\@oddfoot}
\begin{document}

\begin{abstract}
This article concerns a scalar multidimensional conservation law where
the flux is of Panov type and may contain spatial discontinuities. We define a notion
of entropy solution and prove that entropy solutions are unique. We propose a Godunov-type finite volume scheme and prove that the Godunov approximations converge to an entropy solution, thus establishing existence of entropy solutions. We also show that our numerical scheme converges at an optimal rate of $\mathcal{O}(\sqrt{\D t}).$	To the best of our knowledge, convergence of the Godunov type methods in multi-dimension and  error estimates of the numerical scheme in one as well as in several dimensions are the first of it's kind for conservation laws with discontinuous flux.
We present numerical examples that illustrate the theory.
	\end{abstract}
\begin{frontmatter}

\title{A Godunov type scheme and error estimates for multidimensional scalar conservation laws with Panov-type discontinuous flux}
\author[myaddress1]{Shyam Sundar Ghoshal}
\ead{ghoshal@tifrbng.res.in}


\address[myaddress1]{Centre for Applicable Mathematics,Tata Institute of Fundamental Research, Post Bag No 6503, Sharadanagar, Bangalore - 560065, India.}

\author[myaddress2]{John D. Towers}
\ead{john.towers@cox.net}

\author[myaddress1]{Ganesh Vaidya}
\ead{ganesh@tifrbng.res.in}

\address[myaddress2]
{MiraCosta College, 3333 Manchester Avenue, Cardiff-by-the-Sea, CA 92007-1516, USA.}
\end{frontmatter}
\tableofcontents

\section{Introduction}

In this article we study the initial value problem for the following conservation law in several dimensions, 
\begin{eqnarray}
\label{eq:discont}
u_t+\sum\limits_{i=1}^d \frac{\partial}{\partial x_i}A_i(\vb{x},u)&=& 0 \,\quad\quad \quad \quad \text{for}\,\,\,(t,\vb{x}) \in(0,\infty)\times\R^d,\\
\label{eq:data}  u(0,\vb{x})&=&u_0(\vb{x}) \,\,\quad\quad \text{for}\,\,\,\vb{x} \in\R^d, 
\end{eqnarray} where the flux $\vb{A}:\R^d \times \R \rightarrow \R^d$ is of Panov type, as in \cite{Panov2009b} and can have infinitely many spatial discontinuities with accumulation points. In particular, $\vb{A}(\vb{x},u)=\vb{g}(\beta(\vb{x},u))$, where 
 $\vb{g}$ can be a locally Lipschitz continuous real-valued function and $\beta(\vb{x},\cdot)$ is a monotone function for each $\vb{x}\in\R^d.$ Thus in this article we do not  impose any restriction on the shape of $u \mapsto \vb{A}(\vb{x},u)$ and thereby extending the one dimensional convergence analysis discussed in   \cite{GJT_2019,GTV_2020_2,JDT_2020}. 
 One-dimensional conservation laws with discontinuous flux have been the subject of a large literature over the past several decades.
The multidimensional case has received less attention, see e.g., 
\cite{aleksic_mitrovic,AM_multi,Crasta:2016aa,Crasta:2015aa,graf_et_al,
hold_karl_mit,Karlsen_Rascle_Tadmor,Panov2009a,Panov2009b}.

Even for the case of one dimension ($d=1$), mathematical analysis of these type of equations is complicated due to the presence of discontinuities in the spatial variable of the flux function $\vb{A}(\cdot,\cdot)$. It is well known that when $\vb{x} \mapsto \vb{A}(\vb{x},u)$ is not sufficiently smooth, the classical Kruzkov inequality, 
\begin{eqnarray}\label{kruzkov}
\partial_t |u-k|+\partial_{\vb{x}}\left[\sgn(u-k) (\vb{A}(\vb{x},u)-\vb{A}(\vb{x},k))\right] + \sgn(u-k) \partial_{\vb{x}}\vb{A}(\vb{x},k) \leq 0, \quad k\in \R,
\end{eqnarray}
does not make sense due to the term $\sgn(u-k){\partial_{\vb{x}}}\vb{A}(\vb{x},k).$ 
When the spatial discontinuities are discrete, the uniqueness of weak solutions is obtained by imposing certain additional conditions (known as interface entropy conditions) along the spatial discontinuities of the flux, which require the existence of traces. Various types of entropy conditions can be chosen depending on the underlying physics of  the problem, details of which can be found in \cite{AJG, AMV,mishra2007convergence,adimurthi2000conservation, andreianov2013vanishing, AKR,BGKT,burger2006extended,bkrt2,BKT_2009,towers_disc_flux_1} and the references therein. However, when the spatial discontinuities accumulate, the traces do not exist in general. To overcome this obstacle, the notion of \textit{adapted entropy solutions} has been proposed, first in  \cite{BaitiJenssen} for a monotone flux, and then in \cite{AudussePerthame} for monotone or unimodal flux.
The adapted entropy approach to uniqueness can be seen as a generalization of the classical Kruzkov theory. Adapted entropy conditions use a certain class of spatially dependent steady state solutions $k=k(\vb{x})$ chosen so that the term 
$\sgn(u-k(\vb{x})){\partial_{\vb{x}}}\vb{A}(\vb{x},k(\vb{x}))$ vanishes. This work was later generalized in \cite{Panov2009a} to  $\vb{A}(\vb{x},u)$ of the form $\vb{g}(\beta(\vb{x},u)).$ In addition, uniqueness results for solutions of \eqref{eq:discont}-\eqref{eq:data} have been further generalized to fluxes possessing degeneracy, see \cite{GTV_2020_1}. The convergence analysis of the numerical schemes for these kind of fluxes was open for a quite a long time and recently this has been answered in \cite {GJT_2019,GTV_2020_2,JDT_2020}. 

The notion of interface entropy condition was then generalized to several dimensions in \cite{AM_multi} and  existence of such solutions was established via the vanishing viscosity method. However the convergence of finite volume approximations remains open for the multidimensional problem even for the case of single discontinuity. For the case of homogeneous flux (no spatial dependence), convergence of numerical approximations is established by the so-called dimension splitting techniques see for example, \cite{CM_1980,HKLR_book}.  The classical dimensional splitting arguments cannot be used when the fluxes are discontinuous because the solutions do not satisfy the TVD property in general \cite{ADGG, Ghoshal-JDE, Ghoshal-NHM, GTV_2020_1}. 
However for certain class of  Panov-type discontinuous fluxes, recently \cite{GTV_2020_2} shows that though the solution does not satisfy TVD property, the function $\beta(\cdot,u(\cdot))$ possesses the TVD  property. So in this article we prove this property for general $g$ and use it to establish the convergence of the dimension splitting method. Our technique also implies the existence of a BV bound on the solution for the class fluxes which are under consideration, which is of independent interest.

Another aim  of this article is to study the error analysis of our numerical method. From a practical point of view, along with the convergence,  it is also important to understand how fast the scheme converges, i.e. how fast the error of approximation of the exact solution
$u$ by the numerical approximation $||u^{\Delta }(T,\cdot)-u(T, \cdot)||_{L^1}$ goes to zero as mesh size $\Delta $ goes to zero. This can be measured in terms of the $\alpha$ which satisfies the following
\begin{eqnarray}||u^{\Delta }(T,\cdot)-u(T,\cdot)||_{L^1} \leq C {\Delta t}^{\alpha}.
\end{eqnarray}
In addition, convergence rates can also be used for a posteriori error based mesh adaptation \cite{VD_2000} and optimal design of multilevel Monte Carlo methods \cite{BRK_19}.
In the case of a spatially independent flux with $d=1$, using the doubling of the variable argument, Kuznetsov \cite{Kuznetsov} proved that monotone schemes converge to the weak solution satisfying the Kruzkov entropy condition with $\alpha=1/2$.  Reference \cite{Karlsen_multi_error} shows that these results are indeed true in several spatial dimensions (for a homogeneous flux). Sabac constructed explicit examples in \cite{sabac_optimal} which imply that this estimate is optimal. 
Of late,  \cite{fjordholm2020convergence} proves the convergence rates of monotone schemes for conservation laws for Holder continuous initial data with Holder exponent greater than 1/2, where bounded variation of the initial data is not required.  For unilateral constrained problem  \cite{cances2012error} provides error estimate for the Godunov approximation of the problem to be  $\mathcal{O}({\Delta t}^{\frac{1}{3}}).$ However the rates can be shown to be the optimal rate of $\mathcal{O}(\sqrt{\Delta t})$ provided bounds on the temporal total variation of the finite volume approximation exists in the cells adjacent to the point where constraint is imposed. The techniques introduced in this paper can be adapted to scalar conservation laws with  discontinuous flux (with finitely many discontinuities) and the rate of convergence depends on the temporal total variation bounds of the finite volume approximation in the cells adjacent to the spatial of discontinuities of the flux (see section 7.3, \cite{cances2012error}). Such bounds on temporal variation can be easily obtained for Riemann data, however such bounds were not known for general data. Very recently, the  bounds on the temporal total variation of the finite volume approximation are proved for the the case of strictly monotone fluxes \cite{BR_2020} and thus the rates are shown to be 1/2 for monotone fluxes with finitely many spatial discontinuities.  These estimates are  obtained based on the idea that, for the case of monotone fluxes, problem of discontinuous flux can be treated as boundary value problem with a $\BV$ boundary data, where Kuznetsov's type arguments can be invoked and combining the boundary value problems, error estimates can be obtained for the IVP \eqref{eq:discont}-\eqref{eq:data},  which allows to estimate the boundary terms in space at the discontinuities that appear when applying the classical Kuznetsov theory to problem. 

To the best of our knowledge proofs for the optimal rate $1/2$ are not known for general $\BV$ data for non monotone flux even in the case of single discontinuity. Also, no results on error estimates are available when spatial discontinuities of the flux are allowed to be infinite, which in turn may accumulate. 
In this article,  for a certain class of fluxes we prove that Godunov type schemes converge to the adapted entropy solution with the optimal rate 1/2, thus dispensing with the assumption of  strict monotonicity and finitely many points of discontinuity of \cite{BR_2020} to obtain the optimal rate 1/2. Since the methods of \cite{BR_2020} are not applicable when the set of spatial discontinuities contains accumulation points,  we prove a Kuznetsov type lemma based on adapted entropy formulation to obtain the error estimates. To the best of our knowledge, this is the first error estimate for conservation laws with discontinuous flux, where the set of spatial discontinuities of $A_i(\vb{x},u)$ is infinite and may also contain accumulation points.

 In Section~\ref{sec_uniqueness} we define the relevant notion of solution of entropy solution and prove uniqueness 
of entropy solutions.
Section~\ref{sec_godunov} describes the Godunov-type finite volume scheme we use to prove existence. We prove convergence
of the Godunov approximations, first in the one-dimensional case (Section~\ref{sec_Godunov_1D}), and then in the 
multidimensional case (Section~\ref{sec_Godunov_multiD}). The convergence result, combined with the uniqueness result of
Section~\ref{sec_uniqueness}, yields a well-posedness result for the problem.
Section~\ref{sec_error_est} establishes of rate convergence estimate, obtained by a Kuznetsov-type analysis.
In Section~\ref{sec_numerical} we present the results of several numerical experiments.

\section{Uniqueness of the adapted entropy solution in several dimensions}\label{sec_uniqueness}
We consider the fluxes of the form $\vb{A}(\vb{x},u)=\vb{g}(\beta(\vb{x},u))$, where $\vb{g}$ and $\beta$ satisfy the following assumptions.
\begin{enumerate}[label=\textbf{A-\arabic*}]
\item \label{A1} $\vb{g}: \R \rightarrow \R^d$ is a  locally Lipschitz continuous function. 
\item \label{A2} $\beta(\vb{x},u)$ is continuous on $\prod_{i=1}^{d}\mathbb{R}\setminus {\Omega}_i \times \mathbb{R},$ where $\Omega_i$ for $i=1,2,\ldots, d$ are closed zero measure sets in $\R$. In addition $u \mapsto \beta(\vb{x},u)$ is strictly increasing. 
\item \label{A3} For  strictly increasing functions  $h_1,h_2:\R \rightarrow \R$ such that $\lim\limits_{|u| \rightarrow \infty}\abs{h_1(u)}=\infty,$ for any fixed $u$, $h_1(u)\leq \beta(\vb{x},u)\leq h_2(u),$ for all $\vb{x}\in \R^d.$  \end{enumerate}
\begin{definition}[Adapted Entropy Condition] \label{def_adapted_entropy}
Let $Q= [0,T)\times\R^d$. 
\begin{eqnarray}\label{E1}
{\partial_t} |u(t,\vb{x})-k_{\alpha}(\vb{x})| +\sum\limits_{i=1}^d{\partial_{x_i}}\left[ \sgn (u-k_{\alpha}(\vb{x})) (A_i(u,\vb{x})-g_i(\alpha)) \right] \leq 0, \text{ in } \mathcal{D}'{(Q)}
\end{eqnarray}for $\alpha \in \R.$
Or equivalently, for all $0\leq\phi \in C_c^{\infty}(Q),$
\begin{align}
&\int\limits_Q |u(t,\vb{x})-k_{\alpha}(\vb{x})|\phi_t(t,\vb{x})+\sum\limits_{i=1}^d\sgn (u(t,\vb{x})-k_{\alpha}(\vb{x})) (A_i(\vb{x},u(t,\vb{x}))-g_i(\alpha))\phi_{x_i}(t,\vb{x})\, d\vb{x} dt\nonumber \\
&+\int\limits_{\R}|u_0(\vb{x})-k_{\alpha}(\vb{x})|\phi(0,\vb{x})\,d\vb{x}\geq 0,\label{E2}
\end{align}
where $k_{\alpha}: \R^d \rightarrow \R$ is a stationary state defined by $k_{\alpha}(\vb{x}):=\beta^{-1}(\vb{x},\alpha).$
\end{definition}
\begin{remark}\normalfont For $d=1$ and $\vb{A}(\vb{x},u)$  unimodal, the above definition of \textit{adapted entropy solutions} can be viewed as the generalization of the definition given in \cite{AudussePerthame}, in the following sense:\\
Let $\Psi_{\vb{A}}(\vb{x},u)$ denote the  singular map corresponding to $\vb{A}(\vb{x},u).$ Then the flux can be written in the Panov form
$\vb{A}(\vb{x},u)=\vb{g}(\beta(\vb{x},u)),$ with $\vb{g}(u)=\abs{u}$ and $\beta(\vb{x},u)=\Psi_{\vb{A}}(\vb{x},u).$ Now, for $\alpha \in \R,$ we have, 
\begin{equation*}
k_{\alpha}(\vb{x})= 
\begin{cases}
k_{\alpha}^+(\vb{x}), \quad & \alpha\geq 0,\\
k^-_{\abs{\alpha}}(\vb{x}), \quad & \alpha\leq 0.
\end{cases} 
\end{equation*}
Here,
$k^{\pm}_{\alpha}(\vb{x}):=(A^{\pm})^{-1}(\vb{x},\alpha)$ for $ \alpha >0.$
\end{remark}

\begin{theorem}\label{theorem1}
Let $u,v \in C([0,T];L^1_{loc}(\R^d)) \cap L^{\infty}(Q)$ be entropic solutions to the IVP \eqref{eq:discont}-\eqref{eq:data} with initial data $u_0,v_0 \in L^{\infty}(\R).$ Assume the flux satisfies the hypothesis \eqref{A1}--\eqref{A3}. Then for $t\in [0,T]$ the following holds,
\begin{equation*}
\displaystyle \int\limits_{S_0}|u(t,\vb{x})-v(t,\vb{x})|d\vb{x} \leq \displaystyle\int\limits_{S_t}|u_0(\vb{x})-v_0(\vb{x})|d\vb{x},
\end{equation*}
where $\overline{M}:=\sup\{\abs{\vb{A}_u(\vb{x},u(t,\vb{x}))};\,\vb{x}\in\R^d,0\leq t\leq T\}$, $S_0=\prod_{i=1}^{d}[a_i,b_i]$,   $S_t=\prod_{i=1}^{d}[a_i-\overline{M}t,b_i+\overline{M}t],$ $i=1,2,\ldots ,d$ and  
 $-\f\leq a_i<b_i\leq \f$ .
\end{theorem}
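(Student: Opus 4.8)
The plan is to run a Kruzkov doubling-of-variables argument in which the constant Kruzkov entropies are replaced by the stationary family $k_\alpha(\vb{x})=\beta^{-1}(\vb{x},\alpha)$, and then to close the estimate with the classical truncated backward cone. Fix a nonnegative $\Phi(t,\vb{x},s,\vb{y})\in C_c^\infty$. In the entropy inequality \eqref{E2} for $u$, in the variables $(t,\vb{x})$, for each fixed $(s,\vb{y})$ I take $\alpha=\beta(\vb{y},v(s,\vb{y}))$ — which by \eqref{A3} and $v\in L^\infty(Q)$ stays in a fixed bounded interval, so $k_\alpha$ is uniformly bounded — use $\Phi(\cdot,\cdot,s,\vb{y})$ as test function, and integrate over $(s,\vb{y})$. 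I do the mirror-image thing for $v$ in $(s,\vb{y})$ with $\alpha=\beta(\vb{x},u(t,\vb{x}))$, and add. The algebra that makes this work: because $A_i(\vb{z},k_\alpha(\vb{z}))=g_i(\beta(\vb{z},k_\alpha(\vb{z})))=g_i(\alpha)$ for every $\vb{z}$, with $\alpha=\beta(\vb{y},v(s,\vb{y}))$ one has $g_i(\alpha)=A_i(\vb{y},v(s,\vb{y}))$, so the flux difference in the $u$-inequality is $A_i(\vb{x},u(t,\vb{x}))-A_i(\vb{y},v(s,\vb{y}))$; and since $\beta(\vb{x},\cdot)$ is strictly increasing by \eqref{A2},
\[
\sgn\bigl(u(t,\vb{x})-k_\alpha(\vb{x})\bigr)=\sgn\bigl(\beta(\vb{x},u(t,\vb{x}))-\beta(\vb{y},v(s,\vb{y}))\bigr),
\]
which is antisymmetric under $(t,\vb{x})\leftrightarrow(s,\vb{y})$. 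Hence the two flux contributions coincide, while the two entropy densities are $\bigl|u(t,\vb{x})-\beta^{-1}(\vb{x},\beta(\vb{y},v(s,\vb{y})))\bigr|$ and $\bigl|v(s,\vb{y})-\beta^{-1}(\vb{y},\beta(\vb{x},u(t,\vb{x})))\bigr|$.

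Now I pass to the diagonal. Choosing $\Phi(t,\vb{x},s,\vb{y})=\psi\!\left(\tfrac{t+s}{2},\tfrac{\vb{x}+\vb{y}}{2}\right)\rho_\varepsilon(t-s)\prod_{i=1}^d\rho_\varepsilon(x_i-y_i)$ with $0\le\psi\in C_c^\infty(Q)$ and $\rho_\varepsilon$ a standard mollifier, only the $\psi$-derivatives survive in $\partial_t\Phi+\partial_s\Phi$ and in $\partial_{x_i}\Phi+\partial_{y_i}\Phi$. Letting $\varepsilon\to0$ and using the continuity of $\vb{x}\mapsto\beta(\vb{x},\cdot)$ and $\vb{x}\mapsto\beta^{-1}(\vb{x},\cdot)$ off the zero-measure sets $\Omega_i$, the identity $\beta^{-1}(\vb{x},\beta(\vb{x},\cdot))=\mathrm{id}$, and the $L^\infty(Q)\cap C([0,T];L^1_{\loc}(\R^d))$ regularity of $u,v$, the diagonal values collapse to $|u-v|$, $\sgn(u-v)$, and $A_i(\vb{x},u)-A_i(\vb{x},v)$, while the initial data terms combine to $\int_{\R^d}|u_0-v_0|\psi(0,\cdot)$ in the usual way. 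This yields the Kato-type inequality
\[
\int_Q\Bigl(|u-v|\,\psi_t+\sum_{i=1}^d\sgn(u-v)\bigl(A_i(\vb{x},u)-A_i(\vb{x},v)\bigr)\psi_{x_i}\Bigr)d\vb{x}\,dt+\int_{\R^d}|u_0-v_0|\,\psi(0,\vb{x})\,d\vb{x}\ge0
\]
for every $0\le\psi\in C_c^\infty(Q)$.

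Finally I insert the truncated cone. Assume first all $a_i,b_i$ finite and fix $\tau\in(0,T)$. Take
\[
\psi(t,\vb{x})=\bigl(1-H_\eta(t-\tau)\bigr)\prod_{i=1}^d\bigl(G_\delta(x_i-a_i+\overline{M}(\tau-t))-G_\delta(x_i-b_i-\overline{M}(\tau-t))\bigr),
\]
where $H_\eta,G_\delta$ are smooth nondecreasing approximations of $\chi_{[0,\infty)}$; the product approximates the indicator of the $t$-slice of the backward cone, which is $S_\tau$ at $t=0$ and $S_0$ at $t=\tau$. Differentiating, the factor $-H_\eta'(t-\tau)$ produces $-\int_{S_0}|u(\tau,\vb{x})-v(\tau,\vb{x})|\,d\vb{x}$ as $\eta\to0$; the term $\psi(0,\cdot)$ produces $\int_{S_\tau}|u_0-v_0|$; and the rest, after collecting the nonnegative factors $\prod_{j\ne i}(\cdots)$, is a sum over $i$ of terms bounded by
\[
-\overline{M}\bigl(G'_{a,i}+G'_{b,i}\bigr)|u-v|+\sgn(u-v)\bigl(A_i(\vb{x},u)-A_i(\vb{x},v)\bigr)\bigl(G'_{a,i}-G'_{b,i}\bigr)\le0,
\]
where the inequality uses $G'_{a,i},G'_{b,i}\ge0$ and $|A_i(\vb{x},u)-A_i(\vb{x},v)|\le\overline{M}|u-v|$, which follows from the definition of $\overline{M}$. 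Letting $\delta\to0$ gives $-\int_{S_0}|u(\tau)-v(\tau)|+\int_{S_\tau}|u_0-v_0|\ge0$, which is the assertion with $t=\tau$; the cases with some $a_i=-\infty$ or $b_i=+\infty$ follow by monotone convergence and are trivial when the right-hand side is infinite.

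The main obstacle is the doubling step itself: inserting into the whole family of entropy inequalities a value of $\alpha$ that is only measurable in $(s,\vb{y})$, justifying the interchange of integrations, and — above all — carrying out the diagonal limit $\vb{x}\to\vb{y}$, which is exactly where \eqref{A2} (continuity and strict monotonicity of $\beta$, with $\Omega_i$ null) and \eqref{A3} (uniform bounds on $k_\alpha$) are used, together with the customary delicate handling of the initial-layer terms near $t=s=0$. Once the Kato-type inequality is available, the cone argument is routine.
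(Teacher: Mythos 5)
Your proposal is correct and follows essentially the same route as the paper: doubling of variables with the adapted entropies $k_\alpha$, choosing $\alpha=\beta(\vb{y},v(s,\vb{y}))$ in the $u$-inequality and $\alpha=\beta(\vb{x},u(t,\vb{x}))$ in the $v$-inequality, exploiting the cancellation $\sgn(u-\tilde v)+\sgn(v-\tilde u)=0$ and the diagonal limits of $\tilde u,\tilde v$. The only difference is that you carry out explicitly the Kato inequality and the truncated-cone test function with speed $\overline{M}$, a step the paper simply defers to Audusse--Perthame, and your treatment of it is the standard correct one.
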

\begin{proof}
Let $\xi_{\eta},\rho_{\epsilon}\in C^{\infty}_c(\R)$  be mollifiers, such that  $supp(\rho) \subset [-2,-1].$ We define $\Phi_{\eta,\epsilon}: {Q}^2\rightarrow \R \in C_c^{\infty}({Q}^2) $ as follows,
 \begin{equation*}
 \Phi_{\eta,\epsilon}(t,\vb{x},s,\vb{y})=\phi(t,\vb{x})\rho_{\epsilon}(t-s)\prod_{i=1}^{d}\xi_{\eta}(x_i-y_i).
\end{equation*}
Set $k_{\beta(\vb{y},v(s,\vb{y}))}(\vb{x})=\tilde{v}(s,\vb{y},\vb{x}),$ by the definition of $k_{\alpha}$ we get  
\begin{equation}\label{4.6}
\pa_t|u(t,\vb{x})-\tilde{v}(s,\vb{y},\vb{x})|+\sum_{i=1}^d\pa_{x_i}\left[ \sgn (u(t,\vb{x})-\tilde{v}(s,\vb{y},\vb{x})) (A_i(\vb{x},u(t,\vb{x}))-A_i(\vb{y},v(s,\vb{y})))\right]\le 0.
\end{equation}
Similarly, now rewriting the entropy condition $v(s,\vb{y})$
with $\alpha=\beta(\vb{x},u(t,\vb{x}))$, we get
\begin{eqnarray}\label{4.7}
\pa_s |v(s,\vb{y})-\tilde{u}(t,\vb{x},\vb{y})|+\sum_{i=1}^d\pa_{y_i}\left[ \sgn (v(s,\vb{y})-\tilde{u}(t,\vb{x},\vb{y})) (A_i(\vb{y},v(s,\vb{y}))-A_i(\vb{x},u(t,\vb{x})))\right]\leq 0.
\end{eqnarray}
Integrating \eqref{4.6} in $\vb{x},\vb{y},t,s$ against the function $\Phi_{\eta,\epsilon}(t,\vb{x},s,\vb{y})$, we have,
\begin{align*}
&\int\limits_{{Q}^2} |u(t,\vb{x})-\tilde{v}(s,\vb{y},\vb{x})|\phi(t,\vb{x})\rho^{\p}_{\epsilon}(t-s) \prod_{i=1}^{d}\xi_{\eta}(x_i-y_i)d\vb{x} d\vb{y} dt ds\\
+&\int\limits_{{Q}^2} |u(t,\vb{x})-\tilde{v}(s,\vb{y},\vb{x})|\phi_t(t,\vb{x})\rho_{\epsilon}(t-s)\prod_{i=1}^{d}\xi_{\eta}(x_i-y_i)d\vb{x} d\vb{y} dt ds\\
+&\sum_{i=1}^d\int\limits_{{Q}^2}\Big\{\left[ \sgn (u(t,\vb{x})-\tilde{v}(s,\vb{y},\vb{x})) (A_i(\vb{x},u(t,\vb{x}))-A_i(\vb{y},v(s,\vb{y})))\right]\phi(t,\vb{x})\rho_{\epsilon}(t-s) \xi^{\p}_{\eta}(x_i-y_i)\\
&\quad \quad \quad \quad \quad \quad \quad \quad \quad \quad \quad \quad \quad \quad \quad \quad \quad \quad\quad \quad \quad \quad \quad \quad \quad \quad \quad\quad \quad \quad \times \prod_{j\neq i}\xi_{\eta}(y_j-y_j) \Big\}d\vb{x} d\vb{y} dt ds\\
+&\sum_{i=1}^d\int\limits_{{Q}^2}\left[ \sgn (u(t,\vb{x})-\tilde{v}(s,\vb{y},\vb{x})) (A_i(\vb{x},u(t,\vb{x}))-A_i(\vb{y},v(s,\vb{y})))\right]\phi_{x_i}(t,\vb{x})\rho_{\epsilon}(t-s) \prod_{i=1}^{d}\xi_{\eta}(x_i-y_i)) d\vb{x} d\vb{y} dt ds \\
+&\int_{Q}\int\limits_{\R}|u_0(\vb{x})-\tilde{{v}}(s,\vb{y},\vb{x})|\phi(0,\vb{x})\rho_{\epsilon}(-s)\prod_{i=1}^{d}\xi_{\eta}(x_i-y_i) d\vb{x} d\vb{y} ds\geq 0.
\end{align*}
Integrating \eqref{4.7} in $\vb{x},\vb{y},t,s$ against function $\Phi_{\eta,\epsilon}(t,\vb{x},s,\vb{y})$, we get
\begin{align*}
&-\int\limits_{{Q}^2} |v(s,\vb{y})-\tilde{u}(t,\vb{x},\vb{y})|\phi(t,\vb{x})\rho^{\p}_{\epsilon}(t-s)\prod_{i=1}^{d}\xi_{\eta}(x_i-y_i)d\vb{x} d\vb{y} dt ds\\
&-\sum\limits_{i=1}^d\int\limits_{{Q}^2}\left[ \sgn (v(s,\vb{y})-\tilde{u}(t,\vb{x},\vb{y})) (A_i(\vb{y},v(s,\vb{y}))-A_i(\vb{x},u(t,\vb{x})))\right]\phi(t,\vb{x})\rho_{\epsilon}(t-s) \xi^{\p}_{\eta}(x_i-y_i)\\
&\quad \quad \quad \quad \quad \quad \quad \quad \quad \quad \quad \quad \quad \quad \quad \quad \quad \quad\quad \quad \quad \quad \quad \quad \quad \quad \quad\quad \quad \quad \times \prod_{j\neq i}\xi_{\eta}(y_j-y_j) d\vb{x} d\vb{y} dt ds\\
+&\int_{Q}\int\limits_{\R}|v_0(\vb{x})-\tilde{{u}}(t,\vb{x},\vb{y})|\phi(t,\vb{x})\rho_{\epsilon}(t)\prod_{i=1}^{d}\xi_{\eta}(x_i-y_i) d\vb{x} d\vb{y} dt\geq 0.
\end{align*}

Adding the above two inequalities and collecting the common terms, we have the sum of the following 6 terms:
\begin{enumerate}[label=\roman*.]
\item 
\begin{equation*}
\int\limits_{{Q}^2} |u(t,\vb{x})-\tilde{v}(s,\vb{y},\vb{x})|\phi_t(t,\vb{x})\rho_{\epsilon}(t-s)\prod_{i=1}^{d}\xi_{\eta}(x_i-y_i)d\vb{x} d\vb{y} dt ds,
\end{equation*}
 \item 
 \begin{equation*}
 \int\limits_{{Q}^2} \left(|u(t,\vb{x})-\tilde{v}(s,\vb{y},\vb{x})|-|v(s,\vb{y})-\tilde{u}(t,\vb{x},\vb{y})|\right)\phi(t,\vb{x})\rho^{\p}_{\epsilon}(t-s)\prod_{i=1}^{d}\xi_{\eta}(x_i-y_i)d\vb{x} d\vb{y} dt ds,
 \end{equation*}
\item 
\begin{equation*}
\sum\limits_{i=1}^d\int\limits_{{Q}^2}\left[ \sgn (u(t,\vb{x})-\tilde{v}(s,\vb{y},\vb{x})) (A_i(\vb{x},u(t,\vb{x}))-A_i(\vb{y},v(s,\vb{y})))\right]\phi_{x_i}(t,\vb{x})\rho_{\epsilon}(t-s) \prod_{i=1}^{d} \xi_{\eta}(x_i-y_i) d\vb{x} d\vb{y} dt ds,
\end{equation*}
\item 
\begin{align*}
&\sum\limits_{i=1}^d\int\limits_{{Q}^2}\left[\Big(\sgn (v(s,\vb{y})-\tilde{u}(t,\vb{x},\vb{y}))+\sgn (u(t,\vb{x})-\tilde{v}(s,\vb{y},\vb{x}))\Big) (-A_i(\vb{y},v(s,\vb{y}))+A_i(\vb{x},u(t,\vb{x})))\right]\\
&\quad \quad\quad \quad\quad \quad\quad \quad\quad \quad\quad \quad\quad \quad \quad\quad \quad\quad \quad\quad  \quad
\phi(t,\vb{x})\rho_{\epsilon}(t-s) \frac{\partial}{\partial x_i}\prod_{j=1}^{d}\xi_{\eta}(y_j-y_j) d\vb{x} d\vb{y} dt ds,
 \end{align*}
\item 
\begin{equation*}
\int\limits_{Q}\int\limits_{\R}|u_0(\vb{x})-\tilde{{v}}(s,\vb{y},\vb{x})|\phi(t,\vb{x})\rho_{\epsilon}(-s)\prod_{i=1}^{d}\xi_{\eta}(x_i-y_i) d\vb{x} d\vb{y} ds,
\end{equation*}
\item 
\begin{equation*}
\int\limits_{Q}\int\limits_{\R}|v_0(\vb{x})-\tilde{{u}}(t,\vb{x},\vb{y})|\phi(t,\vb{x})\rho_{\epsilon}(t)\prod_{i=1}^{d}\xi_{\eta}(x_i-y_i) d\vb{x} d\vb{y} dt,
\end{equation*}
\end{enumerate}
is greater than or  equal to 0.
Now the rest of the proof can be completed on the similar lines of \cite{AudussePerthame} using the following properties of $\tilde{u}$ and $\tilde{v}:$
\begin{eqnarray*}
\tilde{v}(s,\vb{y},\vb{y})=v(s,\vb{y}) \text{ and }
\tilde{u}(t,\vb{x},\vb{x})=u(t,\vb{x}) \text{ for } \vb{x},\vb{y} \in \R^d, t>0,
\end{eqnarray*}
\begin{eqnarray*}
\tilde{v}(s,\vb{y},\vb{x}) &\rightarrow& v(s,\vb{y}), \text{ as } \vb{x} \rightarrow \vb{y} \text{ for a.e. } \vb{y}\in \R^d,\\
\tilde{u}(t,\vb{x},\vb{y}) &\rightarrow& u(t,\vb{x}), \text{ as } \vb{y} \rightarrow \vb{x} \text{ for a.e. } \vb{x}\in \R^d,
\end{eqnarray*}
 \begin{eqnarray*}
\left(\sgn (v(s,\vb{y})-\tilde{u}(t,\vb{x},\vb{y}))+\sgn (u(t,\vb{x})-\tilde{v}(s,\vb{y},\vb{x}))\right) =0.
\end{eqnarray*}\end{proof}
\begin{remark}\normalfont
The notion of adapted entropy can be generalized for $\beta=(\beta_1,\beta_2,\ldots,\beta_d)$ satisfying the  assumptions \eqref{A2}-\eqref{A3} if in addition for every  $\alpha_1 \in \R,$ there exists a unique $(d-1)$-tuple $(\alpha_2,\alpha_3,\ldots,\alpha_d)$ such that  the following holds:
\begin{eqnarray*}\beta_i(\vb{x},\beta_1^{-1}(\vb{x},\alpha_1)) =\alpha_i \quad \text{for } \vb{x}\in \R^d. \end{eqnarray*}
Uniqueness of the solutions can be proved on the similar lines of Theorem \ref{theorem1} with appropriate changes. 
\end{remark}

\section{Godunov scheme and its convergence}\label{sec_godunov}

\subsection{Convergence in one dimension }\label{sec_Godunov_1D}
We briefly present the the convergence analysis for a general $g.$ Most of the proofs are in the spirit of \cite{GTV_2020_2}. Consider the initial value problem \eqref{eq:discont}-\eqref{eq:data}, where in addition the flux $\vb{A}(\vb{x},u)=A(x,u)=g(\beta(x,u))$ satisfies the following:
\begin{enumerate}[label=\textbf{B-\arabic*}]
\item \label{B1}For $u,v \in [-r,r],$
\begin{equation}\label{beta_2}
\abs{\beta(x,v)-\beta(x,u)} \le \mK_1(r) \abs{u-v},
\end{equation}
for some continuous $\mK_1: \R \rightarrow [0,\infty)$.
Also,
\begin{equation}\label{beta_1A}
\abs{\beta(x,u)-\beta(y,u)} \le \mK_2(u) \abs{\alpha(x)-\alpha(y)},
\end{equation}
where $\mK_2: \R \rightarrow [0,\infty)$ is continuous and $\alpha \in \BV(\R)$.
\item\label{B2} For some $\mK_3 >0$, independent of $x$,
\begin{equation}\label{beta_1}
\abs{\beta(x,u)-\beta(x,v)} \ge \mK_3 \abs{u-v}.
\end{equation}\item \label{B3} $g(z)$ is (locally) Lipschitz-continuous, i.e.,
\begin{equation}\label{g_lip}
\textrm{$\abs{g(z_1) - g(z_2)} \le \mK_4(M) \abs{z_1-z_2}$ for $z_1, z_2 \in [-M,M]$}, M>0,
\end{equation}
where $\mK_4:\R \rightarrow [0,\infty)$ is continuous.

\end{enumerate}
For $\D x,\D t>0,$ consider equidistant spatial grid points $x_i:=i\D x$ for $i\in\Z$ and temporal grid points $t^n:=n\D t$ 
for integers $0 \le n\le N$, such that $T \in [t^N,t^{N+1})$. Let $\la:=\D t/\D x$. Let $\chi(x)$ denote the indicator function of $C_i:=[x_i - \D x /2, x_i + \D x /2)$, and let
$\chi^n(t)$ denote the indicator function of $C^n:=[t^n,t^{n+1})$. We approximate the initial data according to:
\begin{equation}
u^{\D}_0(x):=\sumi\chi(x)u^0_i\quad \mbox{where }u^0_i=u_0(y_i)\mbox{ for }i\in\Z.
\end{equation}
The  approximations generated by the scheme are denoted by $u_i^n$, where $u_j^n \approx u(x_j,t^n)$.
The grid function $\{u_i^n\}$ is extended to a function defined on $\Pi_T=\R \times [0,T]$ 

 via
\begin{equation*}\label{def_u_De}
u^{\D}(x,t) =\sum_{n=0}^N \sumi \chi(x) \chi^n(t) u_i^n.
\end{equation*}
Similarly, we define another grid function $\beta_i^n=\beta(x_i,u_i^n)\approx \beta(x_i,u(x_i,t^n)) ,$ and is extended to a function defined on $\Pi_T$  via \begin{equation*}\label{def_beta_De}
\beta^{\D}(x,t) =\sum_{n=0}^N \sumi \chi(x) \chi^n(t) \beta_i^n.
\end{equation*}
We use the symbols $\Delta_{\pm}$ to denote spatial difference operators:
\begin{equation}
\Delta_+ z_i = z_{i+1}-z_i, \quad \Delta_- z_i = z_{i}-z_{i-1}.
\end{equation}For a sequence $\{a_i\}_{i\in \Z},$ we define the total variation by
 \begin{eqnarray*}
 \TV(a):=\sum\limits_{i\in \Z}\abs{a_i-a_{i-1}}.
\end{eqnarray*}

We use the Godunov type scheme given by:
\begin{equation}\label{scheme_A}
u_i^{n+1} = u_i^n - \lambda \D_- \bar{A}(u^n_i,u^n_{i+1},x_i,x_{i+1}), \quad i \in \Z,  n=0,1,2,\ldots,
\end{equation}
where the numerical flux $\bar{A}$ is the generalized Godunov flux of \cite{GTV_2020_2}:
\begin{eqnarray}\label{def_bar_A_direct}
\bar{A}(u,v,x_i,x_{i+1}) :=\bar{g}\left(\beta(x_i,u), \beta(x_{i+1},v)\right)
\end{eqnarray}
and 

\begin{equation}
\bar{g}(p,q) = 
\begin{cases}
\min_{w \in [p,q]}g(w), \quad & p\le q,\\
\max_{w \in [q,p]}g(w), \quad & p\ge q.
\end{cases} 
\end{equation}
$\bar{A}$ is a generalization of the classical Godunov numerical flux \cite{CranMaj:Monoton,leveque_book} with $\beta(x,u)=u$ in the sense that
\begin{equation}
\bar{A}(u,v,x,x) = 
\begin{cases}
\min_{w \in [u,v]} A(x,w), \quad & u\le v,\\
\max_{w \in [v,u]} A(x,w), \quad & u\ge v.
\end{cases} 
\end{equation}
\begin{lemma}The following bounds hold:
\begin{enumerate}[label=\roman*.]
\item $
\overline{\alpha}_+:=\sup_{x\in \R}\beta(x,u_0(x)) < \infty$  and $
\overline{\alpha}_-:=\inf_{x\in \R}\beta(x,u_0(x))>-\infty.$

\item There exists $\mM$ such that \begin{eqnarray}
||k_{\overline{\alpha}_{\pm}}||_{L^{\infty}}<\mM.
\end{eqnarray}
\end{enumerate}
\end{lemma}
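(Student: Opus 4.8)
The plan is to obtain part~(i) directly from the uniform envelope in assumption~\eqref{A3} together with $u_0\in L^\infty(\R)$, and then to derive part~(ii) by feeding part~(i) into the one-sided Lipschitz bound \eqref{beta_1} of assumption~\eqref{B2}. I use throughout that the hypotheses \eqref{A1}--\eqref{A3} of Section~\ref{sec_uniqueness} remain in force in Section~\ref{sec_Godunov_1D}, being imposed there in addition to \eqref{B1}--\eqref{B3}.

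For part~(i), set $R:=\|u_0\|_{L^\infty(\R)}$, so that $-R\le u_0(x)\le R$ for a.e.\ $x\in\R$. Since $h_1,h_2$ are real valued and increasing, evaluating \eqref{A3} at $u=u_0(x)$ gives
\begin{equation*}
h_1(-R)\le h_1(u_0(x))\le \beta(x,u_0(x))\le h_2(u_0(x))\le h_2(R),
\end{equation*}
and taking the supremum, respectively the infimum, over $x$ yields $\overline{\alpha}_+\le h_2(R)<\infty$ and $\overline{\alpha}_-\ge h_1(-R)>-\infty$; this proves~(i) and records $\abs{\overline{\alpha}_\pm}\le C_1:=\max\{\abs{h_1(-R)},\abs{h_2(R)}\}$. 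Evaluating \eqref{A3} at $u=0$ also gives the auxiliary fact $\abs{\beta(x,0)}\le C_0:=\max\{\abs{h_1(0)},\abs{h_2(0)}\}$ for every $x\in\R$.

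For part~(ii), fix $\alpha\in\{\overline{\alpha}_+,\overline{\alpha}_-\}$ and a point $x$ outside the exceptional null set $\Omega_1$ of \eqref{A2}, so that $u\mapsto\beta(x,u)$ is a continuous, strictly increasing surjection of $\R$ onto $\R$: continuity and strict monotonicity are part of \eqref{A2}, while surjectivity holds because \eqref{beta_1} together with $\abs{\beta(x,0)}<\infty$ forces $\beta(x,u)\to\pm\infty$ as $u\to\pm\infty$; in particular $k_\alpha(x)=\beta^{-1}(x,\alpha)$ is well defined. By construction $\beta(x,k_\alpha(x))=\alpha$, so applying \eqref{beta_1} to the pair $\bigl(k_\alpha(x),0\bigr)$ and invoking the bounds of part~(i),
\begin{equation*}
\mK_3\,\abs{k_\alpha(x)}\le\abs{\beta(x,k_\alpha(x))-\beta(x,0)}=\abs{\alpha-\beta(x,0)}\le\abs{\alpha}+\abs{\beta(x,0)}\le C_1+C_0.
\end{equation*}
Since this holds for a.e.\ $x$, we conclude $\|k_{\overline{\alpha}_\pm}\|_{L^\infty}\le\mK_3^{-1}(C_0+C_1)$, and any $\mM$ strictly larger than this constant gives the asserted strict inequality.

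The argument is elementary and I do not anticipate a genuine obstacle. The two points needing a little care are (a) ensuring that $k_\alpha(x)$ is meaningful, which is why the coercivity of $\beta(x,\cdot)$ --- a consequence of \eqref{beta_1} once $\beta(x,0)$ is known to be finite --- is recorded above, and (b) the measure-zero set $\Omega_1$, which is harmless because every estimate is needed only $L^\infty$-almost everywhere. As an alternative for part~(i) that bypasses \eqref{A3}, one may bound the oscillation of $x\mapsto\beta(x,u_0(x))$ directly via \eqref{beta_2}--\eqref{beta_1A}, using that the function $\alpha$ appearing in \eqref{beta_1A} lies in $\BV(\R)$ and hence is bounded.
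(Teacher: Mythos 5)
Your proof is correct, and it is worth separating the two parts when comparing with the paper, whose entire proof is the one-liner ``follows due to assumption \eqref{A3}''. For part (i) you do exactly what the paper intends: sandwich $\beta(x,u_0(x))$ between $h_1(-R)$ and $h_2(R)$ using the envelope of \eqref{A3} and $u_0\in L^\infty$. For part (ii) you take a genuinely different route: the argument implicit in the paper is $h_1(k_\alpha(x))\le\alpha\le h_2(k_\alpha(x))$ for $\alpha\in\{\overline{\alpha}_+,\overline{\alpha}_-\}$, so that coercivity of the envelope functions bounds $k_{\overline{\alpha}_\pm}$ uniformly, whereas you combine the uniform lower bound \eqref{beta_1} of \eqref{B2} with \eqref{A3} evaluated at $u=0$ to get $\mK_3\abs{k_\alpha(x)}\le\abs{\alpha}+\abs{\beta(x,0)}$. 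Since Section~\ref{sec_Godunov_1D} imposes \eqref{B1}--\eqref{B3} in addition to the standing hypotheses, invoking \eqref{B2} is legitimate, and it buys you two things: it yields the surjectivity of $u\mapsto\beta(x,u)$, so that $k_{\overline{\alpha}_\pm}(x)=\beta^{-1}(x,\overline{\alpha}_\pm)$ is actually well defined, and it avoids any reliance on growth of $h_2$, which \eqref{A3} as written does not assume (only $\abs{h_1(u)}\to\infty$ is stated), while the envelope-only argument needs $h_2(u)\to-\infty$ as $u\to-\infty$, or the surjectivity of $\beta(x,\cdot)$, to bound $k_\alpha$ from below. Two small remarks: restricting to $x\notin\Omega_1$ is unnecessary, since \eqref{beta_2} already gives continuity of $u\mapsto\beta(x,u)$ for every $x$ (and is in any case harmless for an $L^\infty$ bound); and your argument is specific to the one-dimensional setting, since the identical lemma in Section~\ref{sec_Godunov_multiD} has no analogue of \eqref{B2} and is instead immediate there from \eqref{C2}, where $\beta(\vb{x},u)=au+r(\vb{x})$ with $r\in\BV(\R^2)$ bounded.
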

\begin{proof}
Proof follows due to assumption \eqref{A3}.
\end{proof}
\begin{remark}\normalfont The above lemma is  the analogue of Lemma 3.1 of \cite{GJT_2019} for $k_{\overline{\alpha}}^{\pm}.$
\end{remark} Let $\ms = \sup_{\abs{u} \le \mM, x\in \R} \abs{\beta(x,u)}$, and
define
$
L_{\beta} = \mK_1(\mM),  L_g = \mK_4(\ms).
$
Hereafter the ratio $\lambda=\frac{\D t}{\D x}$ is fixed and satisfies the $\CFL$ condition:
\begin{eqnarray}\label{cfl}\lambda L_{g} L_{\beta} \leq 1/2.\end{eqnarray}

	\begin{lemma}\label{lemma:A_u_bound_multi}
                  Under the $\CFL$ condition \eqref{cfl}, the scheme is monotone and the Godunov approximations are bounded:\begin{equation}\label{u_bounded_multi}
	\abs{u_i^n} \le \mM, \quad i \in \Z, n=0,1,2,\ldots.
	\end{equation}
\end{lemma}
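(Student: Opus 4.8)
The plan is to establish monotonicity of the scheme and the bound \eqref{u_bounded_multi} \emph{together}, by induction on $n$, using the stronger inductive hypothesis that the grid function is trapped between the two steady-state grid functions $v_i := \beta^{-1}(x_i,\overline{\alpha}_-)$ and $w_i := \beta^{-1}(x_i,\overline{\alpha}_+)$, i.e.\ $v_i \le u_i^n \le w_i$ for all $i\in\Z$. Since $\norma{k_{\overline{\alpha}_{\pm}}}_{L^{\infty}} < \mM$ by the preceding lemma, this enclosure immediately gives \eqref{u_bounded_multi}. The two statements cannot be separated because the constants $L_\beta = \mK_1(\mM)$ and $L_g = \mK_4(\ms)$ in the $\CFL$ condition \eqref{cfl} are the correct Lipschitz constants only while all grid values involved lie in $[-\mM,\mM]$, so monotonicity of the update map is available only on that range --- which is exactly what is being proved.

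First I would record the elementary properties of the building blocks. From the $\min/\max$ representation of $\bar g$, the map $\bar g(p,q)$ is non-decreasing in $p$, non-increasing in $q$, satisfies $\bar g(p,p)=g(p)$, and inherits the local Lipschitz constant of $g$, i.e.\ $\abs{\bar g(p',q)-\bar g(p,q)} \le L_g\abs{p'-p}$ and $\abs{\bar g(p,q')-\bar g(p,q)} \le L_g\abs{q'-q}$ for arguments in $[-\ms,\ms]$ (the bound holds even across the diagonal $p=q$, by splitting there). Composing with $u\mapsto\beta(x,u)$, which is strictly increasing by \eqref{A2} and locally Lipschitz by \eqref{beta_2}, the numerical flux in \eqref{def_bar_A_direct} satisfies: $\bar A(u,v,x_i,x_{i+1})$ is non-decreasing in $u$, non-increasing in $v$, and $L_gL_\beta$-Lipschitz in each of $u,v$ for $u,v\in[-\mM,\mM]$. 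Writing the scheme \eqref{scheme_A} as $u_i^{n+1} = H_i(u_{i-1}^n,u_i^n,u_{i+1}^n)$ with
\[
H_i(a,b,c) = b - \lambda\bigl[\bar A(b,c,x_i,x_{i+1}) - \bar A(a,b,x_{i-1},x_i)\bigr],
\]
monotonicity in $a$ and in $c$ is immediate from the monotonicity of $\bar A$ in its slots together with the signs in $H_i$; for the middle slot, for $b'>b$ in $[-\mM,\mM]$,
\[
H_i(a,b',c)-H_i(a,b,c) = (b'-b) - \lambda\bigl[\bar A(b',c,\cdot)-\bar A(b,c,\cdot)\bigr] + \lambda\bigl[\bar A(a,b',\cdot)-\bar A(a,b,\cdot)\bigr],
\]
where the first bracket lies in $[0,\,L_gL_\beta(b'-b)]$ and the second in $[-L_gL_\beta(b'-b),\,0]$, so the difference is $\ge (1-2\lambda L_gL_\beta)(b'-b)\ge0$ by \eqref{cfl}. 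Hence $H_i$ is non-decreasing in each argument on $[-\mM,\mM]^3$, which is the monotonicity assertion, and the scheme is order-preserving there.

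For the bound I would then compare with the steady states. Since $\beta(x_i,w_i)=\overline{\alpha}_+$ for every $i$, the quantity $\bar A(w_i,w_{i+1},x_i,x_{i+1}) = \bar g(\overline{\alpha}_+,\overline{\alpha}_+) = g(\overline{\alpha}_+)$ is independent of $i$, whence $H_i(w_{i-1},w_i,w_{i+1}) = w_i$; likewise $H_i(v_{i-1},v_i,v_{i+1})=v_i$, so $\{w_i\}$ and $\{v_i\}$ are fixed points of the scheme. By the definition of $\overline{\alpha}_{\pm}$ and the strict monotonicity of $\beta(x_i,\cdot)$ from \eqref{beta_1} one has $v_i\le u_i^0\le w_i$. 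Assuming $v_j\le u_j^n\le w_j$ for all $j$, all grid values at level $n$ (and the $v_j,w_j$) lie in $[-\mM,\mM]$, so the monotonicity established above applies with the fixed constants $L_\beta,L_g$ and $\CFL$ number, giving
\[
v_i = H_i(v_{i-1},v_i,v_{i+1}) \le H_i(u_{i-1}^n,u_i^n,u_{i+1}^n) = u_i^{n+1} \le H_i(w_{i-1},w_i,w_{i+1}) = w_i;
\]
this closes the induction and, with $\norma{k_{\overline{\alpha}_{\pm}}}_{L^{\infty}}<\mM$, yields \eqref{u_bounded_multi}.

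The main obstacle is exactly this circularity between monotonicity and the $L^\infty$ estimate, resolved by carrying the two-sided enclosure $v_i\le u_i^n\le w_i$ through the induction (which is also why the preceding lemma on $\norma{k_{\overline{\alpha}_{\pm}}}_{L^{\infty}}$ was singled out); the remaining ingredients --- the monotonicity and Lipschitz continuity of $\bar g$, and the sign bookkeeping for the middle slot of $H_i$ --- are routine.
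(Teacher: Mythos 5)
Your proof is correct and follows essentially the same route as the paper: the paper's (very condensed) proof likewise gets monotonicity from the fact that $\bar g$ is a monotone numerical flux with $\beta(x,\cdot)$ increasing under the CFL condition, and gets the bound by observing that the discretized $k_{\overline{\alpha}_{\pm}}$ are steady states of the scheme and sandwiching the approximations between them, in the spirit of Lemmas 3.5--3.6 of the cited reference. Your write-up simply supplies the details of that argument (including the correct observation that the enclosure in $[-\mM,\mM]$ must be carried through the induction so that the Lipschitz constants in the CFL condition apply), with only a cosmetic slip in citing \eqref{beta_1} rather than \eqref{A2} for the strict monotonicity of $\beta(x,\cdot)$.
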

\begin{proof}
Monotonicity follows because $\overline{g}$ is a monotone numerical flux and $\beta(x,\cdot)$ is increasing. For the bound on the approximations, note that $k_{\overline{\alpha}_{\pm}}$ are steady states and thus proof can be completed in the spirit of Lemma 3.5 and Lemma 3.6 of \cite{GJT_2019}. 
\end{proof}

\begin{lemma}Under the $\CFL$ condition \eqref{cfl}, the following properties hold: 
\begin{enumerate}[label=\roman*.]
\item \label{lemma_time_continuity}
Discrete time continuity estimates:

\begin{eqnarray}\label{time_cont}
\sumi \abs{u_i^{n+1}-u_i^n} \le \mK_5 \TV(\beta^0), n=0,1,2,\ldots
\end{eqnarray}

where $\mK_5>0$ is independent of the mesh size $\D$.
\item\label{beta_tvd} $\TVD$ property with respect to $\{\beta_i^n\}:$ 
    \begin{eqnarray}
    \sum\limits_{i\in \mathbb{Z}}\abs{\beta_{i+1}^{n+1} - \beta_i^{n+1}} \le \sum\limits_{i\in \mathbb{Z}}\abs{\beta_{i+1}^{n} - \beta_i^{n}}.
    \end{eqnarray}
\item \label{contractivity} Discrete $L^1$ contractivity: Let $u_0,v_0 \in L^{\infty} \cap L^1(\R)$ and $\{u_i^n\},\{v_i^n\}$ be the corresponding numerical approximations calculated by the Godunov scheme. Then,\begin{eqnarray} \sum\limits_{i\in \mathbb{Z}}\left|u_i^{n+1}-v_i^{n+1}\right|\leq \sum\limits_{i\in \mathbb{Z}}\left|u_i^n-v_i^n\right|\quad n=0,1,2,\ldots.
\end{eqnarray}
\item\label{dis_entropy} Discrete entropy inequality:
	\begin{equation}\label{ent_discrete_A}
	\abs{u^{n+1}_i- k^{\alpha}_{i}} 
	\le \abs{u_i^{n} - k^{\alpha}_{i}}
	- \lambda (\mP^n_{\iph} - \mP^n_{\imh}),\mbox{ for all }i\in\Z,n=0,1,2,\ldots,
	\end{equation}
	where
	\begin{equation*}
	\mathcal{P}^n_{\iph} = \bar{A}(u_i^n \vee k^{\alpha}_{i},u_{i+1}^n \vee k^{\alpha}_{i+1},x_i,x_{i+1})	
	                                 -  \bar{A}(u_i^n \wedge k^{\alpha}_{i},u_{i+1}^n \wedge k^{\alpha}_{i+1},x_{\Delta x},x_{i+1}).
	\end{equation*}

\end{enumerate}
\end{lemma}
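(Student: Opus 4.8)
The four assertions are the standard package of estimates for a monotone three-point scheme, and the plan is to derive each from the monotonicity and boundedness already established in \Cref{lemma:A_u_bound_multi}, together with the structural hypotheses \eqref{B1}--\eqref{B3}. For part \ref{beta_tvd}, the key observation (as in \cite{GTV_2020_2}) is that the scheme, although not TVD in $u$, becomes TVD when read through the variable $\beta$. Concretely, I would subtract the update formula \eqref{scheme_A} at consecutive indices, apply $\beta(x_{i+1},\cdot)$ (using that $\beta(x,\cdot)$ is strictly increasing so that $\sgn(\beta_{i+1}^{n+1}-\beta_i^{n+1})=\sgn(u_{i+1}^{n+1}-u_i^{n+1})$), and rewrite the increment $\beta_{i+1}^{n+1}-\beta_i^{n+1}$ as a convex combination of $\beta_{j}^{n}$-differences with nonnegative coefficients summing to $1$; the coefficients are nonnegative precisely under the $\CFL$ condition \eqref{cfl}. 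Summing over $i\in\Z$ and using $\ell^1$ contraction of convex combinations gives the claim. Part \ref{contractivity} is the Crandall--Tartar lemma: a monotone, conservative, translation-compatible scheme that preserves the order of data is an $\ell^1$-contraction; alternatively one writes $|u_i^{n+1}-v_i^{n+1}|$ directly using monotonicity of the numerical flux in each slot, then sums over $i$ so that the flux differences telescope to zero.

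For the discrete entropy inequality \ref{dis_entropy}, I would follow the now-classical Crandall--Majda argument adapted to the spatially dependent, $\beta$-composed flux: define the numerical entropy flux $\mathcal P^n_{\iph}$ as stated, note that $k^{\alpha}_i=\beta^{-1}(x_i,\alpha)$ is an exact steady state of the scheme (so $k^{\alpha}_i = k^{\alpha}_i - \lambda\D_-\bar A(k^{\alpha}_i,k^{\alpha}_{i+1},x_i,x_{i+1})$ because $\bar g(\alpha,\alpha)=g(\alpha)$), and then apply the monotone update to both $u_i^n\vee k^{\alpha}_i$ and $u_i^n\wedge k^{\alpha}_i$. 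Subtracting the two resulting inequalities and using $|a-b| = a\vee b - a\wedge b$ together with monotonicity of $(u,v)\mapsto u-\lambda\D_-\bar A$ yields \eqref{ent_discrete_A}. The only subtlety is bookkeeping: the Godunov flux $\bar A$ depends on the grid points $x_i,x_{i+1}$ through $\beta$, but since $k^{\alpha}_i$ carries exactly the $x_i$-dependence that makes $\beta(x_i,k^{\alpha}_i)=\alpha$ constant in $i$, the steady-state cancellation goes through verbatim.

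For the time-continuity estimate \ref{lemma_time_continuity}, the plan is to bound $\sum_i|u_i^{n+1}-u_i^n|$ by first passing to the $\beta$ variable. From \eqref{scheme_A}, $u_i^{n+1}-u_i^n = -\lambda\D_-\bar A(u^n_i,u^n_{i+1},x_i,x_{i+1})$, and using the local Lipschitz bound \eqref{beta_1} in the form $|u_i^{n+1}-u_i^n|\le \mK_3^{-1}|\beta(x_i,u_i^{n+1})-\beta(x_i,u_i^n)|$ together with the Lipschitz continuity \eqref{g_lip} of $g$ and \eqref{beta_2}, one estimates $|\bar A(u^n_i,u^n_{i+1},x_i,x_{i+1}) - \bar A(u^n_{i-1},u^n_i,x_{i-1},x_i)|$ by $L_g$ times a sum of $|\beta^n_{j+1}-\beta^n_j|$-type terms plus a term proportional to $|\alpha(x_i)-\alpha(x_{i-1})|$ coming from \eqref{beta_1A}. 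Summing over $i$, invoking the TVD bound \ref{beta_tvd} to replace $\TV(\beta^n)$ by $\TV(\beta^0)$, and absorbing $\TV(\alpha)<\infty$ into the constant, one obtains \eqref{time_cont} with $\mK_5$ depending only on $L_g$, $\mK_3$, $\lambda$, $\TV(\alpha)$ and the continuous functions $\mK_i$ evaluated at the uniform bounds $\mM,\ms$ — in particular independent of $\D$.

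The main obstacle is part \ref{beta_tvd}: establishing that the increment $\beta_{i+1}^{n+1}-\beta_i^{n+1}$ admits a convex-combination representation in terms of the $\beta_j^n$-differences requires carefully expanding $\bar g(\beta_i^n,\beta_{i+1}^n)$ as a Lipschitz function of its arguments with slopes in $[0,L_g]$ and tracking the sign pattern so that the discrete chain rule through $\beta(x_{i+1},\cdot)$ produces only nonnegative weights; this is exactly where the composed structure $A=g(\beta(x,\cdot))$ and the $\CFL$ condition \eqref{cfl} (with the factor $1/2$) are used, and it is the step that genuinely differs from the constant-coefficient theory. Everything else is routine once \ref{beta_tvd} is in hand, since \ref{lemma_time_continuity} feeds on it and \ref{contractivity}, \ref{dis_entropy} are standard consequences of monotonicity.
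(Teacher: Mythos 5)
Your overall architecture matches the route the paper has in mind (the paper simply defers parts i, ii, iv to \cite{GTV_2020_2} and part iii to the Crandall--Tartar lemma): Crandall--Tartar for the $L^1$ contraction, the Crandall--Majda argument built on the fact that $k^\alpha_i=\beta^{-1}(x_i,\alpha)$ is an exact discrete steady state (indeed $\bar A(k^\alpha_i,k^\alpha_{i+1},x_i,x_{i+1})=\bar g(\alpha,\alpha)=g(\alpha)$ is constant in $i$) for the discrete entropy inequality, and a $\beta$-level estimate for TVD and time continuity. However, your justification of the crucial part ii rests on a step that is false: the identity $\sgn(\beta_{i+1}^{n+1}-\beta_i^{n+1})=\sgn(u_{i+1}^{n+1}-u_i^{n+1})$ does not hold, because $\beta_{i+1}^{n+1}=\beta(x_{i+1},u_{i+1}^{n+1})$ and $\beta_i^{n+1}=\beta(x_i,u_i^{n+1})$ are evaluated at \emph{different} spatial points, and monotonicity of $u\mapsto\beta(x,u)$ only orders values at the same $x$ (take $\beta(x,u)=u+r(x)$ with $r$ jumping between $x_i$ and $x_{i+1}$: one can have $u_{i+1}^{n+1}>u_i^{n+1}$ while $\beta_{i+1}^{n+1}<\beta_i^{n+1}$). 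This is not a cosmetic slip: the entire point of the lemma is that the scheme is \emph{not} TVD in $u$ but is TVD in $\beta$, which is incompatible with such a sign equivalence, and since you yourself flag the convex-combination representation as the main obstacle and this identity is the only concrete justification you offer for it, part ii is not established. The argument that actually works (and is the one in \cite{GTV_2020_2}) never compares signs with $u$-differences: write $\beta_i^{n+1}-\beta_i^n=\theta_i^n\,(u_i^{n+1}-u_i^n)$ with $\theta_i^n\in[\mK_3,L_\beta]$ a difference quotient of $\beta(x_i,\cdot)$ (assumptions B-1, B-2), add and subtract $\bar g(\beta_i^n,\beta_i^n)=g(\beta_i^n)$ in the flux difference so that
\begin{equation*}
\beta_i^{n+1}=\beta_i^n+\hat C_{i+1/2}\,(\beta_{i+1}^n-\beta_i^n)-\hat D_{i-1/2}\,(\beta_i^n-\beta_{i-1}^n),
\qquad 0\le \hat C_{i+1/2},\,\hat D_{i-1/2}\le \lambda L_\beta L_g\le \tfrac12,
\end{equation*}
and conclude by Harten's lemma; note this is an incremental form with coefficients bounded by the CFL number, not a convex combination with weights summing to one.

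For part i, once part ii is in hand the estimate is immediate and does not produce the $\TV(\alpha)$ term you introduce: since the numerical flux depends on $(u_i^n,u_{i+1}^n,x_i,x_{i+1})$ only through $(\beta_i^n,\beta_{i+1}^n)$, one has $\abs{u_i^{n+1}-u_i^n}=\lambda\abs{\bar g(\beta_i^n,\beta_{i+1}^n)-\bar g(\beta_{i-1}^n,\beta_i^n)}\le \lambda L_g\big(\abs{\beta_{i+1}^n-\beta_i^n}+\abs{\beta_i^n-\beta_{i-1}^n}\big)$, and summing over $i$ and using part ii gives \eqref{time_cont} with $\mK_5=2\lambda L_g$. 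Your detour through $\mK_3^{-1}\abs{\beta(x_i,u_i^{n+1})-\beta(x_i,u_i^n)}$ and the spatial estimate \eqref{beta_1A} is unnecessary and would only yield the weaker bound $C\,(\TV(\beta^0)+\TV(\alpha))$, which is not the stated inequality (the $\TV(\alpha)$ contribution cannot in general be absorbed into a constant multiple of $\TV(\beta^0)$, since $\TV(\beta^0)$ may be small while $\TV(\alpha)$ is large), although it would still suffice for the compactness argument. Parts iii and iv are correct as sketched and coincide with the paper's intended proofs.
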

\begin{proof}
The proofs of \eqref{lemma_time_continuity}, \eqref{beta_tvd} and \eqref{dis_entropy} are same as the ones presented in \cite{GTV_2020_2}. The proof of \eqref{contractivity} follows from the Crandall-Tartar lemma \cite{HRbook}.
\end{proof}

\begin{theorem}\label{theorem2}
	Assume that the flux function $A(x,u)$ satisfies   Assumptions \descref{B1}{B-1} through \descref{B3}{B-3}, 
	and that $u_0\in \BV(\R)$. 
	Then as the mesh size $\D \rightarrow 0$, the approximations $u^{\D}$ generated by the Godunov scheme described 
	above converge in $L^1_{\loc}(Q)$ and pointwise a.e. in $Q$ to the unique adapted entropy solution 
	$u \in L^{\infty}(Q) \cap C([0,T]:L^1_{\loc}(\R))$ corresponding to the Cauchy problem \eqref{eq:discont}, 
	\eqref{eq:data} with initial data $u_0$. In addition, the total variation $u(\cdot,t)$ is uniformly  bounded for $t\geq 0$.
\end{theorem}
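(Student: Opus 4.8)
The plan is to establish convergence via a compactness argument, using the estimates already collected in the preceding lemmas, and then to identify the limit as the unique adapted entropy solution using Theorem \ref{theorem1}. First I would observe that Lemma \ref{lemma:A_u_bound_multi} gives the uniform $L^\infty$ bound $\abs{u_i^n} \le \mM$, so the family $\{u^\D\}$ is bounded in $L^\infty(Q)$. Next, the TVD property with respect to $\{\beta_i^n\}$ (item \eqref{beta_tvd}) gives $\TV(\beta^\D(\cdot,t)) \le \TV(\beta^0)$ for all $t$, and assumption \descref{B2}{B-2}, namely $\abs{\beta(x,u)-\beta(x,v)} \ge \mK_3\abs{u-v}$, together with assumption \descref{B1}{B-1} (the bound \eqref{beta_1A} involving $\alpha \in \BV(\R)$), allows the spatial variation of $u^\D(\cdot,t)$ itself to be controlled: writing $u_i^n = \beta(x_i,\cdot)^{-1}(\beta_i^n)$ and splitting $\abs{u_{i+1}^n - u_i^n}$ using the Lipschitz bound on $\beta(x_{i+1},\cdot)^{-1}$ (which follows from \eqref{beta_1}) and the $x$-continuity bound \eqref{beta_1A}, one obtains
\begin{equation*}
\TV(u^\D(\cdot,t)) \le \frac{1}{\mK_3}\TV(\beta^\D(\cdot,t)) + C\,\TV(\alpha) \le \frac{1}{\mK_3}\TV(\beta^0) + C\,\TV(\alpha),
\end{equation*}
uniformly in $t$ and in $\D$. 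Here one uses $u_0 \in \BV(\R)$ together with \eqref{beta_2} to bound $\TV(\beta^0)$ by $L_\beta\,\TV(u_0)$ plus a term controlled by $\TV(\alpha)$. This is exactly the uniform total variation bound claimed at the end of the theorem statement, and it passes to the limit by lower semicontinuity of $\TV$.

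With the uniform $L^\infty$ and spatial-$\BV$ bounds in hand, I would invoke the discrete time-continuity estimate \eqref{time_cont}, which bounds $\sum_i \abs{u_i^{n+1}-u_i^n}$ by $\mK_5\,\TV(\beta^0)$ uniformly; summing over time steps gives an $L^1$-in-time modulus of continuity of the form $\norma{u^\D(\cdot,t) - u^\D(\cdot,s)}_{L^1_{\loc}} \le C(\abs{t-s} + \D t)$. Standard Kolmogorov–Riesz / Helly-type compactness (as in \cite{HRbook}) then yields a subsequence converging in $L^1_{\loc}(Q)$ and pointwise a.e. to some limit $u \in L^\infty(Q) \cap C([0,T];L^1_{\loc}(\R))$, with $\TV(u(\cdot,t))$ uniformly bounded. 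To see that $u$ is an adapted entropy solution, I would pass to the limit in the discrete entropy inequality \eqref{ent_discrete_A}: multiplying by a test function $0 \le \phi \in C_c^\infty(Q)$, summing by parts, and using the consistency of the generalized Godunov flux $\bar A$ with $A(x,\cdot)$ together with the continuity of $k_\alpha$ (and the convergence $\beta^\D \to \beta(\cdot,u)$ which follows from the pointwise convergence of $u^\D$ and continuity of $\beta$), one recovers the inequality \eqref{E2} in the limit for every $\alpha \in \R$; the $k_i^\alpha := k_\alpha(x_i)$ being exact steady states of the scheme is what makes the flux terms telescope correctly. Finally, uniqueness from Theorem \ref{theorem1} identifies the limit independently of the subsequence, so the whole family $u^\D$ converges.

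The main obstacle I anticipate is the derivation of the uniform spatial $\BV$ bound on $u^\D$ from the $\BV$ bound on $\beta^\D$: the inversion $u \mapsto \beta(x,u)$ depends on $x$, so bounding $\abs{u_{i+1}^n - u_i^n}$ requires carefully separating the "change in the value $\beta$" from the "change in the map $\beta(x,\cdot)^{-1}$ as $x$ varies", and the latter must be absorbed into $\TV(\alpha)$ using \eqref{beta_1A}–\eqref{beta_1}; this is precisely the point where the structural assumptions \descref{B1}{B-1}–\descref{B2}{B-2} are essential and where the argument differs from the classical homogeneous case. A secondary technical point is justifying the limit passage in the flux terms of the discrete entropy inequality when $g$ is merely locally Lipschitz and $\beta(x,\cdot)$ merely strictly increasing (not strictly monotone with a uniform modulus), but the uniform $L^\infty$ bound confines everything to a compact range where \descref{B3}{B-3} applies, so this reduces to a routine dominated-convergence argument once the a.e. convergence of $u^\D$ is established.
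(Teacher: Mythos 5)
Your proposal is correct and follows essentially the same route as the paper, which for this theorem simply invokes the argument of \cite{GTV_2020_2}: uniform $L^\infty$ bound via the steady states $k_{\overline{\alpha}_{\pm}}$, the $\TV$D property of $\beta^{\D}$ converted into a uniform spatial $\BV$ bound on $u^{\D}$ through assumptions \descref{B1}{B-1}--\descref{B2}{B-2} and $\TV(\alpha)$, the discrete time-continuity estimate, compactness, a Lax--Wendroff-type passage to the limit in the discrete adapted entropy inequality, and identification of the limit by the uniqueness theorem. The key step you flag as the main obstacle (splitting $\abs{u_{i+1}^n-u_i^n}$ into the variation of $\beta$ and the variation of the map $x\mapsto\beta(x,\cdot)$, absorbed into $\TV(\alpha)$) is exactly the mechanism used there, so no gap remains.
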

\begin{proof}
The proof is same as the one presented in \cite{GTV_2020_2}.
\end{proof}
\subsection{Convergence in several dimensions} \label{sec_Godunov_multiD}
Now, we give the proof of convergence of the numerical scheme to the adapted entropy solution.  For the sake of simplicity we assume $d=2,$ but the proof carries over for the higher dimensions as well in the same way.
We additionally assume that the fluxes satisfy the following:
\begin{enumerate}[label=\textbf{C-\arabic*}]
 \item \label{C1} $\vb{g}(\vb{z})$ is (locally) Lipschitz-continuous., i.e., for $i=1,2$
 \begin{equation}\label{h_lip}
 \abs{g_i(z_1) - g_i(z_2)} \le \mK_6(M) \abs{z_1-z_2} \text{ for } z_1, z_2 \in [-M,M],M>0
 \end{equation}
 where $\mK_6:\R \rightarrow [0,\infty)$ is continuous.

\item \label{C2} $\beta(\vb{x},u)=au+r(\vb{x})$ with $a>0$ and $r\in \BV(\R^2).$
\end{enumerate}
\begin{remark}\normalfont \label{beta} $\beta(\cdot,\cdot)$ satisfies the following properties which will be useful in the sequel.
\begin{enumerate}[label=\roman*.]
\item $\abs{\beta(\vb{x}_1,u)-\beta(\vb{x}_2,u)} =\abs{r(\vb{x}_1)-r(\vb{x}_2)}.$
\item $\abs{\beta(\vb{x},u)-\beta(\vb{x},v)} = a\abs{u-v}.$
\item $\beta^{-1}(\vb{x},u)=\frac{u}{a}-\frac{r(\vb{x})}{a}.$
\item $\abs{\beta^{-1}(\vb{x},u)-\beta^{-1}(\vb{x},v)}=\frac{1}{a}\abs{u-v}.$
\item $\abs{\beta^{-1}(\vb{x}_1,u)-\beta^{-1}(\vb{x}_2,u)}=\frac{1}{a}\abs{r(\vb{x}_1-\vb{x}_2)}.$
\end{enumerate}
\end{remark}
For $\D x,\D y>0,$ consider equidistant spatial grid points $x_i:=i\D x$  and  $y_j:=j\D y$ for $i, j\in\Z.$  
For $\D t>,0$ consider the equidistant temporal grid points $t^n:=n\D t$  and $t^{n+1/2}:=\left(n+1/2 \right)\D t$ for integers $0 \le n\le N$, where $T \in [t^N,t^{N+1})$. 
Let $\la_x:=\D t/\D x$ and $\la_y:=\D t/\D y$. As earlier, let $\chi_{i}(x)$ denote the indicator function of $C_i=[x_i - \D x /2, x_i + \D x /2)$, $\chi_{j}(y)$ denote the indicator function of $C_j=[y_j - \D y /2, y_j + \D y /2) $ and   $\chi_{ij}(x,y)$ denote the indicator function of $C_{ij}:=C_i\times C_j$. Let
$\chi^n(t)$ and $\chi^{n+1/2}(t)$ denote the indicator function of $C^n:=[t^n,t^{n+1/2}),C^{n+1/2}:=[t^{n+1/2},t^{n+1})$ respectively. 
 Given $\D x, \D y >0,$ the total variation of a double sequence $\{a_{ij}\}_{i,j\in \mathbb{Z}}$ is given by

 \begin{eqnarray*}
 \TV_{\D x,\D y}(a):= \D y\sum_{i,j\in \Z} \abs{a_{ij}-a_{i-1,j}}+\D x\sum_{i,j\in \Z} \abs{a_{ij}-a_{i,j-1}}.
 \end{eqnarray*}

Now we define constant approximations, which will be useful in the sequel:
\begin{eqnarray*}
u^{\D}_0(x,y)&:=&\sum\limits_{i,j} \chi_{ij}(x,y)u^0_{ij}\quad \mbox{where }u^0_{ij}=u_0(x_i, y_j)\mbox{ for }i,j\in\Z,\\
r^{\D}(x,y)&:=&\sum\limits_{i,j} \chi_{ij}(x,y)r_{ij}\quad \mbox{where }r_{ij}=r(x_i, y_j)\mbox{ for }i,j\in\Z,\\
k^{\D}_{\alpha}(x,y)&:=&\sum\limits_{i,j} \chi_{ij}(x,y)\frac{\alpha-r_{ij}}{a},\\
\beta^{\D}(x,y,u)&:=&au+r^{\D}(x,y).
\end{eqnarray*}
The  approximations generated by the scheme are denoted by $u_{ij}^n$, where $u_{ij}^n \approx u(x_i, y_j,t^n)$.
The grid function $\{u_{ij}^n\}$ is extended to a function defined on $\Pi_T=\R^2 \times [0,T]$ via
\begin{equation}\label{def_u_De_multi}
u^{\D}(x,y,t) =\sum\limits_{i,j} \chi_{ij}(x,y) \left(\sum_{n=0}^N \left[\chi^n(t) u_{ij}^n +\chi^{n+1/2}(t) u_{ij}^{n+1/2}\right] \right) .
\end{equation}
Similarly, we define another grid function $\beta_{ij}^n=\beta(x_i,y_j,u_{ij}^n)\approx \beta(x_i,y_j,u(x_i,y_j,t^n)) ,$ and is extended to a function defined on $\Pi_T$  via 
\begin{equation}\label{def_beta_De_multi}
\beta^{\D}(x,y,u) =\sum_{n=0}^N \sumi \chi_{ij}(x,y) \chi^n(t) \beta_{ij}^n.
\end{equation}
For $i,j \in \mathbb{Z}$ and $n=0,1,2,\ldots,$ define $\beta_{ij}^n = \beta(x_i,y_j,u_{ij}^n)$ and  $\beta_{ij}^{n+1/2} = \beta(x_i,y_j,u_{ij}^{n+1/2}).$

Now the marching formula is given by
\begin{eqnarray}
\label{xdir}u_{ij}^{n+1/2}&=&u_{ij}^n- \lambda_x \Big(\overline{g}_1(\beta_{ij}^n,\beta_{i+1,j}^n)-\overline{g}_1(\beta_{i-1,j}^n,\beta_{ij}^n) \Big),
\\
\label{ydir}
u_{ij}^{n+1}&=&u_{ij}^{n+1/2}- \lambda_y \Big(\overline{g}_2(\beta_{ij}^{n+1/2},\beta_{i,j+1}^{n+1/2})-\overline{g}_2(\beta_{i,j-1}^{n+1/2},\beta_{ij}^{n+1/2}) \Big),
\end{eqnarray}
where for $l=1,2$, $\bar{g}_l$ denotes the Godunov numerical flux associated with $g_l$:
\begin{equation}
\bar{g}_l(p,q) = 
\begin{cases}
\min_{w \in [p,q]} g_l(w), \quad & p\le q,\\
\max_{w \in [q,p]} g_l(w), \quad & p\ge q.
\end{cases} 
\end{equation}
\begin{lemma}The following bounds hold:
\begin{enumerate}[label=\roman*.]
\item $
\overline{\alpha}_+:=\sup_{(x,y)\in \R^2}\beta(x,y,u_0(x,y)) < \infty$  and $
\overline{\alpha}_-:=\inf_{(x,y)\in \R^2}\beta(x,y,u_0(x,y))>-\infty.$

\item There exists $\mM$ such that \begin{eqnarray}
||k_{\overline{\alpha}_{\pm}}||_{L^{\infty}}<\mM.
\end{eqnarray}
\end{enumerate}

\end{lemma}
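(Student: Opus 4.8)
The plan is to read both bounds off the structural hypotheses \eqref{A3} and \eqref{C2}; this is just the two--dimensional transcription of the corresponding one--dimensional lemma (the analogue of Lemma~3.1 of \cite{GJT_2019}), so the content is almost entirely bookkeeping. The structural fact I would isolate first is that $r$ is bounded: evaluating \eqref{A3} at a fixed value of $u$, say $u=0$, gives $h_1(0)\le\beta(\vb{x},0)=r(\vb{x})\le h_2(0)$ for every $\vb{x}\in\R^2$, so $C_r:=\norma{r}_{L^\infty(\R^2)}<\infty$. This step is worth spelling out, since in two space dimensions $\BV(\R^2)\not\subset L^\infty(\R^2)$, so the $\BV$ hypothesis on $r$ in \eqref{C2} does not by itself deliver boundedness --- it is \eqref{A3} that does.

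For item (i), using the affine form $\beta(x,y,u)=au+r(x,y)$ from \eqref{C2} together with the (standing) boundedness of the Cauchy datum, set $R:=\norma{u_0}_{L^\infty(\R^2)}$; then $\abs{\beta(x,y,u_0(x,y))}\le aR+C_r$ for a.e.\ $(x,y)\in\R^2$, which gives $-\infty<\overline{\alpha}_-\le\overline{\alpha}_+<\infty$. For item (ii), I would use the explicit inverse recorded in Remark~\ref{beta}, namely $k_\alpha(\vb{x})=\beta^{-1}(\vb{x},\alpha)=(\alpha-r(\vb{x}))/a$, which yields the uniform estimate $\abs{k_\alpha(\vb{x})}\le(\abs{\alpha}+C_r)/a$ for all $\vb{x}\in\R^2$. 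Applying this with $\alpha=\overline{\alpha}_+$ and $\alpha=\overline{\alpha}_-$ and setting
\[
\mM:=1+\frac{\max\{\abs{\overline{\alpha}_+},\abs{\overline{\alpha}_-}\}+C_r}{a}
\]
gives $\norma{k_{\overline{\alpha}_\pm}}_{L^\infty}<\mM$, the additive constant ensuring the strict inequality. One could equally well bypass \eqref{C2} and argue as in one dimension: substituting $u=k_\alpha(\vb{x})$ into $h_1\le\beta(\vb{x},\cdot)\le h_2$ and using $\beta(\vb{x},k_\alpha(\vb{x}))=\alpha$ together with the coercivity of $h_1$ traps $k_\alpha(\vb{x})$ in a compact interval independent of $\vb{x}$.

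I do not expect a genuine obstacle here; the only care needed is to pick a single constant $\mM$ that dominates both $k_{\overline{\alpha}_+}$ and $k_{\overline{\alpha}_-}$ (hence the maximum above) while keeping the inequality strict. This $\mM$ is then precisely the constant that propagates through the rest of Section~\ref{sec_Godunov_multiD} --- it enters the definition of the relevant sup bound, the Lipschitz-type constants, and the $\CFL$ condition --- exactly as $\mM$ does in Section~\ref{sec_Godunov_1D}.
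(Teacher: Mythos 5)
Your main argument is correct and follows essentially the paper's (barely spelled-out) route: the paper simply asserts that the bounds follow from assumption \descref{A3}{A-3}, and you flesh this out correctly by using \descref{A3}{A-3} at $u=0$ to bound $r$, the affine form in \descref{C2}{C-2} to bound $\beta(x,y,u_0(x,y))$, and the explicit inverse $k_\alpha=(\alpha-r)/a$ for item (ii). One small caveat about your parenthetical ``bypass'' of \descref{C2}{C-2}: the inequality $h_1(k_\alpha(\vb{x}))\le\alpha$ with $h_1$ increasing and coercive gives only the uniform \emph{upper} bound on $k_\alpha$, while the uniform lower bound from $\alpha\le h_2(k_\alpha(\vb{x}))$ would require $h_2(u)\to-\infty$ as $u\to-\infty$, which \descref{A3}{A-3} does not guarantee, so the argument through \descref{C2}{C-2} is the one to keep.
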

Let $\ms = \sup_{\abs{u} \le \mM, (x,y)\in \R^2} \abs{\beta(x,y,u)}$, and
define
$
L_{\beta} = a,  L_g = \mK_6(\ms).
$

Hereafter the ratios $\lambda_x=\frac{\D t}{\D x}$ and $\lambda_y=\frac{\D t}{\D y}$ are fixed and satisfy the $\CFL$ condition:
\begin{eqnarray}\label{cfl_multi}\lambda_x L_{g_1} L_{\beta} \leq 1/2 \text{ and } \lambda_y L_{g_2} L_{\beta} \leq 1/2.\end{eqnarray}

\begin{lemma}\label{lemma:A_u_bound}
                          Under the $\CFL$ condition \eqref{cfl_multi},  the Godunov approximations are bounded:
	\begin{equation}\label{u_bounded}
	\abs{u_{ij}^n} \le \mM, \quad i,j \in \Z,  n=0,1,2,\ldots.
	\end{equation}
    \end{lemma}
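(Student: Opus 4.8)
The plan is to establish a discrete invariant-region (maximum) principle by combining monotonicity of each fractional step with the fact that the constant-$\alpha$ steady states are reproduced exactly by the dimension-split scheme; this is the two-dimensional, split analogue of Lemma~\ref{lemma:A_u_bound_multi} and of Lemmas~3.5--3.6 of \cite{GJT_2019}. Since the preceding lemma already supplies $\overline\alpha_\pm$ with $\abs{u_0}$ trapped between the two steady states $k_{\overline\alpha_\pm}$ and with $\norma{k_{\overline\alpha_\pm}}_{L^\infty}<\mM$, it suffices to show that this trapping is propagated by the marching formulas \eqref{xdir}--\eqref{ydir}.

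First I would check that, under \eqref{cfl_multi}, the $x$-sweep \eqref{xdir} and the $y$-sweep \eqref{ydir} are each monotone. Writing $u_{ij}^{n+1/2}=H_x\big(u_{i-1,j}^n,u_{ij}^n,u_{i+1,j}^n\big)$ and $u_{ij}^{n+1}=H_y\big(u_{i,j-1}^{n+1/2},u_{ij}^{n+1/2},u_{i,j+1}^{n+1/2}\big)$, one must show that $H_x$ and $H_y$ are non-decreasing in each of their three arguments. Each Godunov flux $\bar g_l(p,q)$ is non-decreasing in $p$, non-increasing in $q$, and $L_{g_l}$-Lipschitz in each variable by \descref{C1}{C-1}, and $\beta(\vb x,u)=au+r(\vb x)$ is increasing in $u$ with slope $L_\beta=a$ by \descref{C2}{C-2}; hence the off-diagonal sensitivities $\partial H_x/\partial u_{i\pm 1,j}$ automatically carry the correct sign, while the diagonal one obeys $\partial H_x/\partial u_{ij}\ge 1-2\lambda_x L_{g_1}L_\beta\ge 0$, the last inequality being precisely \eqref{cfl_multi}; likewise for $H_y$. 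Because $\bar g_l$ is merely Lipschitz I would carry this out with difference quotients rather than derivatives, but the computation is the standard one.

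Next I would record that the steady states are fixed points of the scheme: for $\alpha\in\R$ the grid function $w_{ij}=k_\alpha(x_i,y_j)=(\alpha-r_{ij})/a$ satisfies $\beta(x_i,y_j,w_{ij})=\alpha$ for every $i,j$, so every numerical flux occurring in \eqref{xdir}--\eqref{ydir} equals $g_l(\alpha)$, all flux differences cancel, and $w$ is left unchanged by both sweeps. Applying the increasing map $\beta^{-1}(x_i,y_j,\cdot)$ at grid nodes to the preceding lemma's bound $\overline\alpha_-\le\beta(x,y,u_0(x,y))\le\overline\alpha_+$ gives $k_{\overline\alpha_-}(x_i,y_j)\le u_{ij}^0\le k_{\overline\alpha_+}(x_i,y_j)$; then, since $k_{\overline\alpha_\pm}$ are fixed by the scheme and each sweep is monotone, an induction on $n$ (chaining the $x$-sweep with the $y$-sweep) yields $k_{\overline\alpha_-}(x_i,y_j)\le u_{ij}^n\le k_{\overline\alpha_+}(x_i,y_j)$ for all $i,j,n$, whence $\abs{u_{ij}^n}\le\max\{\norma{k_{\overline\alpha_-}}_{L^\infty},\norma{k_{\overline\alpha_+}}_{L^\infty}\}<\mM$. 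The only genuinely delicate point is the sign of the diagonal coefficient of $H_x,H_y$, i.e. verifying that the factor $1/2$ in \eqref{cfl_multi} is the correct CFL restriction for monotonicity of the split scheme; the remainder is bookkeeping.
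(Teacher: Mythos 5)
Your proposal is correct and follows essentially the same route as the paper, which simply defers to the argument of Lemmas 3.5--3.6 of \cite{GJT_2019}: monotonicity of each Godunov sweep under \eqref{cfl_multi} together with the fact that $k_{\overline{\alpha}_{\pm}}$ are exact discrete steady states, then a comparison/induction argument trapping $u_{ij}^n$ between them. Your explicit verification of the diagonal coefficient bound $1-2\lambda_x L_{g_1}L_{\beta}\ge 0$ is precisely the computation the paper leaves to the cited reference.
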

   \begin{proof}
   Follows on the similar lines of \cite{GJT_2019}.
\end{proof}
\begin{lemma}\label{beta_tvd_multi}
	Under the $\CFL$ condition, \eqref{cfl_multi} the Godunov scheme is $\TVD$ with respect to $\{\beta_{ij}^{n/2}\}$ in the following sense:
  \begin{eqnarray}\TV_{\D x, \D y}(\beta_{ij}^{(n+1)/2}) \leq \TV_{\D x, \D y} (\beta_{ij}^{n/2}), \quad n=0,1,2,\ldots. \end{eqnarray}\end{lemma}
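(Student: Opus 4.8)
\medskip
\noindent\textbf{Proof plan.}
The plan is to reduce each of the two sweeps \eqref{xdir}--\eqref{ydir} to a standard one-dimensional monotone conservative scheme written in the $\beta$-variable, and then to split the two-dimensional functional $\TV_{\D x,\D y}$ into its horizontal and vertical parts, controlling one part by the one-dimensional $\TVD$ property and the other by $L^1$-contractivity. The affine form $\beta(\vb{x},u)=au+r(\vb{x})$ from assumption \descref{C2}{C-2}, with $a>0$ and $r_{ij}$ frozen on the grid, is what makes this reduction possible. Concretely, applying $\beta(x_i,y_j,\cdot)$ to \eqref{xdir} and using $\beta_{ij}^{m}=au_{ij}^{m}+r_{ij}$ at every time level $m$ gives
\begin{equation*}
\beta_{ij}^{n+1/2}=\beta_{ij}^{n}-a\lambda_x\Big(\bar g_1(\beta_{ij}^n,\beta_{i+1,j}^n)-\bar g_1(\beta_{i-1,j}^n,\beta_{ij}^n)\Big),
\end{equation*}
so for each fixed row $j$ the update $(\beta_{i-1,j}^n,\beta_{ij}^n,\beta_{i+1,j}^n)\mapsto\beta_{ij}^{n+1/2}$ is the Godunov scheme for $\beta_t+g_1(\beta)_x=0$ with Courant number $a\lambda_x L_{g_1}=\lambda_x L_{g_1}L_\beta\le 1/2$ by \eqref{cfl_multi}. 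Since $\bar g_1$ is nondecreasing in its first slot, nonincreasing in its second, and Lipschitz on the range guaranteed by Lemma~\ref{lemma:A_u_bound}, this Courant bound makes the scheme monotone; being in conservation form it is sum-preserving, so by the Crandall--Tartar lemma \cite{HRbook} it is an $L^1$-contraction, and by its translation invariance in $i$ it is $\TVD$. Note that the resulting update operator $\mathcal H_x$ depends only on $a$, $\lambda_x$ and $g_1$ — not on $r$ nor on $j$ — so it acts by the same rule on every row.

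I would then decompose
\begin{equation*}
\TV_{\D x,\D y}(\beta^{n+1/2})=\D y\sum_{i,j}\abs{\beta_{ij}^{n+1/2}-\beta_{i-1,j}^{n+1/2}}+\D x\sum_{i,j}\abs{\beta_{ij}^{n+1/2}-\beta_{i,j-1}^{n+1/2}}
\end{equation*}
and bound the two sums separately: the first by applying the one-dimensional $\TVD$ property of $\mathcal H_x$ in each row $j$ and summing over $j$; the second by regarding $\{\beta_{ij}^n\}_i$ and $\{\beta_{i,j-1}^n\}_i$ as two data for the \emph{same} operator $\mathcal H_x$ and applying its $L^1$-contractivity, then summing over $j$. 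Adding the two estimates gives $\TV_{\D x,\D y}(\beta^{n+1/2})\le\TV_{\D x,\D y}(\beta^{n})$. The $y$-sweep \eqref{ydir} is handled by the mirror argument, now with the roles of the two sums interchanged and the Courant bound $\lambda_y L_{g_2}L_\beta\le1/2$, yielding $\TV_{\D x,\D y}(\beta^{n+1})\le\TV_{\D x,\D y}(\beta^{n+1/2})$. Chaining these half-step inequalities (the levels $\beta^{n/2}$, $n=0,1,2,\ldots$, run precisely through $\beta^0,\beta^{1/2},\beta^1,\ldots$) proves the lemma.

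The one step that needs genuine care — and the reason the single-orbit $\TVD$ property of the one-dimensional scheme is not by itself enough — is the cross term: that a sweep in one coordinate direction does not inflate the total variation in the orthogonal direction. This is exactly where the full $L^1$-contractivity of the one-dimensional Godunov operator between distinct data sets (the rows, resp.\ columns) enters; the remainder is routine bookkeeping, and the affine dependence of $\beta$ on $u$ in \descref{C2}{C-2} is precisely what permits the reduction to a bona fide monotone scheme.
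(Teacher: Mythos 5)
Your proposal is correct and follows essentially the same route as the paper: rewrite each sweep as a one-dimensional monotone, conservative Godunov scheme in the $\beta$-variable, control the in-sweep variation by the one-dimensional $\TVD$ property and the cross-direction variation by the Crandall--Tartar $L^1$-contraction between adjacent rows (resp.\ columns), then chain the two half-step estimates. (Your explicit tracking of the factor $a$ in the $\beta$-update and of the effective Courant number $a\lambda_x L_{g_1}$ is a welcome refinement of the same argument.)
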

  \begin{proof}
 Since $\beta(x,y,u)=au+r(x,y),$ the marching formula \eqref{xdir}-\eqref{ydir} implies the following marching formula for $\beta$:
 \begin{eqnarray}
\label{x_dir_beta}\beta_{ij}^{n+1/2}&=&\beta_{ij}^n- \lambda_x \Big(\bar{A}_1(u^n_{ij},u^n_{i+1,j},x_i,x_{i+1},y_j)-\bar{A}_1(u^n_{i-1,j},u^n_{ij},x_{i-1},x_{i},y_j) \Big),\\
\label{y_dir_beta}
\beta_{ij}^{n+1}&=&\beta_{ij}^{n+1/2}- \lambda_y \Big(\bar{A}_2(u^{n+1/2}_{ij},u^{n+1/2}_{i,j+1},x_i,y_j,y_{j+1})-\bar{A}_2(u^{n+1/2}_{i,j-1},u^{n+1/2}_{i,j},x_i,y_{j-1},y_{j})\Big),
\end{eqnarray}
where
\begin{eqnarray*}
\bar{A}_1(u^n_{ij},u^n_{i+1,j},x_i,x_{i+1},y_j)&=&\overline{g}_1(\beta_{ij}^n,\beta_{i+1,j}^n),\\
\bar{A}_2(u^{n+1/2}_{ij},u^{n+1/2}_{i,j+1},x_i,y_j,,y_{j+1})&=&\overline{g}_2(\beta_{ij}^{n+1/2},\beta_{i,j+1}^{n+1/2}).
\end{eqnarray*}
Now, the scheme is monotone and conservative with respect to $\beta$ and thus using Crandall-Tartar lemma for every pair $(p,q) \in \mathbb{Z}^2,$ we have the following $L^1$ contractivity :
\begin{eqnarray}\label{beta_contraction_1}
\sum\limits_{i \in \mathbb{Z}} |\beta^{n+1/2}_{ip}-\beta^{n+1/2}_{iq}| &\leq& \sum\limits_{i \in \mathbb{Z}} |\beta^{n}_{ip}-\beta^{n}_{iq}|,\\\label{beta_contraction_2}
\sum\limits_{j \in \mathbb{Z}} |\beta^{n+1}_{pj}-\beta^{n+1}_{qj}| &\leq& \sum\limits_{j \in \mathbb{Z}} |\beta^{n+1/2}_{pj}-\beta^{n+1/2}_{qj}|. 
\end{eqnarray}Using TVD property \eqref{beta_tvd} for the schemes \eqref{x_dir_beta}-\eqref{y_dir_beta}, one has
\begin{eqnarray}\label{beta_tvd_1}
\sum\limits_{i \in \mathbb{Z}} |\beta^{n+1/2}_{ij}-\beta^{n+1/2}_{i-1,j}|&\leq& \sum\limits_{i \in \mathbb{Z}} |\beta^{n}_{ij}-\beta^{n}_{i-1,j}| \quad \text{ for each }   j\in \mathbb{Z},\\
 \label{beta_tvd_2}\sum\limits_{j\in \mathbb{Z}} |\beta^{n+1}_{ij}-\beta^{n+1}_{i,j-1}|&\leq& 
\sum\limits_{j \in \mathbb{Z}} |\beta^{n+1/2}_{ij}-\beta^{n+1/2}_{i,j-1}| \quad \text{ for each }   i\in \mathbb{Z}.
\end{eqnarray}
For odd $n$, using \eqref{beta_contraction_1} and \eqref{beta_tvd_1}, one has,
\begin{eqnarray*}
\TV_{ \D x, \D y}(\beta_{ij}^{n/2})&=&\Delta y\sum\limits_{i,j \in \mathbb{Z}} |\beta^{n/2}_{ij}-\beta^{n/2}_{i-1j}|+\Delta x \sum\limits_{i,j \in \mathbb{Z}} |\beta^{n/2}_{ij}-\beta^{n/2}_{i,j-1}|,\\&
\leq& \Delta y\sum\limits_{i,j \in \mathbb{Z}} |\beta^{(n-1)/2}_{ij}-\beta^{(n-1)/2}_{i-1,j}| +\Delta x\sum\limits_{i,j \in \mathbb{Z}} |\beta^{(n-1)/2}_{ij}-\beta^{(n-1)/2}_{i,j-1}|,\nonumber
\end{eqnarray*}
which implies the lemma when $n$ is odd. Finally, the proof follows using \eqref{beta_contraction_2} and  \eqref{beta_tvd_2} for even $n$.
\end{proof}
\begin{lemma}\label{lemma:TV_u_multi} Under the $\CFL$ condition \eqref{cfl_multi}, the following properties hold:
\begin{enumerate}[label=\roman*.]
\item \label{tvbeta0} If $\TV_{\D x, \D y} (u^0)< \infty,$ then $\TV_{\D x, \D y} (\beta^0)< \infty.$
\item \label{tvu_multi} Total variation bound on $\{u_{ij}^n\}$: For some $\Delta$-independent constant $\mK_6>0$,
\begin{equation}\label{bv_u_n}
\Delta x \sum\limits_{i,j \in \mathbb{Z}} \abs{u_{i+1,j}^{n} - u_{ij}^{n}}+\Delta y \sum\limits_{i,j \in \mathbb{Z}} \abs{u_{i,j+1}^{n} - u_{ij}^{n}} \le \mK_6, \quad n=0,1,2,\ldots.
\end{equation}
\item Discrete time continuity estimates: 
\begin{eqnarray}\label{time_cont_multi}  \sum\limits_{i,j\in \mathbb{Z}} \abs{u_{ij}^{n+1}-u_{ij}^n} \leq \mK_7, \quad n=0,1,2,\ldots.\end{eqnarray}
\item Discrete entropy inequalities: 
\begin{eqnarray}\label{dis_ad_multi_1}
\abs{u^{\nph}_{ij}- k^{\alpha}_{ij}} 
	&\le& \abs{u_{ij}^{n} - k^{\alpha}_{ij}}
	- \lambda_x (\mP^{n}_{\iph,j} - \mP^n_{\imh, j}),\mbox{ for all }i,j\in\Z,\\\label{dis_ad_multi_2}
	\abs{u^{n+1}_{ij}- k^{\alpha}_{ij}} 
	&\le& \abs{u_{ij}^{\nph} - k^{\alpha}_{ij}}
	- \lambda_y (\mQ^{\nph}_{i,\jph} - \mQ^{\nph}_{i, \jmh}),\mbox{ for all }i,j\in\Z,
\end{eqnarray}
where
\begin{eqnarray*}
\mP^n_{\iph,j}&=&\bar{A}_1(u^n_{ij}\vee k^{\alpha}_{ij},u^n_{i+1,j}\vee k^{\alpha}_{i+1,j},x_i,x_{i+1},y_j)\\
&&\quad \quad \quad-\bar{A}(u^n_{ij}\wedge k^{\alpha}_{ij},u^n_{i+1,j}\wedge k^{\alpha}_{i+1,j},x_i,x_{i+1},y_j),\\
\mQ^{\nph}_{i,\jph}&=&\bar{A}_2(u^{\nph}_{ij}\vee k^{\alpha}_{ij},u^{\nph}_{i,j+1}\vee k^{\alpha}_{i,j+1},x_i,y_j,y_{j+1})\\&& \quad \quad\quad -\bar{A}(u^{\nph}_{ij}\wedge k^{\alpha}_{ij},u^{\nph}_{i,j+1}\wedge  k^{\alpha}_{i,j+1},x_{i},y_j,y_{j+1}).
\end{eqnarray*}
\end{enumerate}
\end{lemma}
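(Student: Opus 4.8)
The plan is to deduce the four assertions by reducing each to its one-dimensional counterpart, applied alternately to the $x$-sweep \eqref{xdir} and the $y$-sweep \eqref{ydir}, with the $\TVD$ estimate for $\beta$ from Lemma~\ref{beta_tvd_multi} supplying the uniform control; the linearity in $u$ of $\beta(\vb{x},u)=au+r(\vb{x})$ assumed in \descref{C2}{C-2} is what lets us pass between bounds on $\beta$ and bounds on $u$. For Part~\ref{tvbeta0}, writing $\beta^0_{ij}=au^0_{ij}+r_{ij}$ and subtracting neighbouring values gives, by the triangle inequality, $\TV_{\D x,\D y}(\beta^0)\le a\,\TV_{\D x,\D y}(u^0)+\TV_{\D x,\D y}(r^0)$; the first term is finite by hypothesis and $\TV_{\D x,\D y}(r^0)$ is finite (and $\D$-independent) since $r\in\BV(\R^2)$, so $\TV_{\D x,\D y}(\beta^0)<\infty$.

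For Part~\ref{tvu_multi}, inverting \descref{C2}{C-2} gives $u^n_{ij}=(\beta^n_{ij}-r_{ij})/a$, and the same neighbour-subtraction argument yields $\TV_{\D x,\D y}(u^n)\le \frac1a\big(\TV_{\D x,\D y}(\beta^n)+\TV_{\D x,\D y}(r^0)\big)$. By Lemma~\ref{beta_tvd_multi} (iterated in $n$) one has $\TV_{\D x,\D y}(\beta^n)\le\TV_{\D x,\D y}(\beta^0)$, and Part~\ref{tvbeta0} bounds the latter by a $\D$-independent constant when $u_0\in\BV(\R^2)$; this gives \eqref{bv_u_n}.

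For the time-continuity estimate, split $u^{n+1}_{ij}-u^n_{ij}=(u^{n+1}_{ij}-u^{n+1/2}_{ij})+(u^{n+1/2}_{ij}-u^n_{ij})$. From \eqref{xdir} and the local Lipschitz continuity of $\bar g_1$ --- a consequence of \descref{C1}{C-1} together with the uniform bound $\abs{\beta^n_{ij}}\le\ms$ from Lemma~\ref{lemma:A_u_bound} --- one gets $\abs{u^{n+1/2}_{ij}-u^n_{ij}}\le\lambda_x L_{g_1}\big(\abs{\beta^n_{i+1,j}-\beta^n_{ij}}+\abs{\beta^n_{ij}-\beta^n_{i-1,j}}\big)$; summing over $i,j$ with the $L^1$-weight $\D x\D y$, using $\lambda_x\D x=\D t$ and Lemma~\ref{beta_tvd_multi}, bounds this by $2\D t\,L_{g_1}\TV_{\D x,\D y}(\beta^0)$. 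The identical argument on \eqref{ydir} at time $t^{n+1/2}$ bounds the first difference by $2\D t\,L_{g_2}\TV_{\D x,\D y}(\beta^{n+1/2})\le 2\D t\,L_{g_2}\TV_{\D x,\D y}(\beta^0)$, and adding gives \eqref{time_cont_multi} with a $\D$-independent $\mK_7$; this is simply the one-dimensional estimate \eqref{time_cont} applied row- and column-wise.

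For the discrete entropy inequalities, the key point is that each $k^\alpha_{ij}=(\alpha-r_{ij})/a$ is a steady state of both half-steps: since $\beta(x_i,y_j,k^\alpha_{ij})=a k^\alpha_{ij}+r_{ij}=\alpha$ for every $i,j$ and $\bar g_l(\alpha,\alpha)=g_l(\alpha)$, the flux differences in \eqref{xdir} and \eqref{ydir} vanish when $u\equiv k^\alpha$. Freezing the index $j$ (resp.\ $i$), the $x$-sweep (resp.\ $y$-sweep) is exactly the one-dimensional Godunov scheme \eqref{scheme_A} with Panov-type flux $\bar A_1(\cdot,\cdot,x_i,x_{i+1},y_j)=\bar g_1\big(\beta(x_i,y_j,\cdot),\beta(x_{i+1},y_j,\cdot)\big)$ (resp.\ its $y$-analogue), for which $k^\alpha$ is a steady state; the one-dimensional discrete entropy inequality \eqref{ent_discrete_A} (proved as in \cite{GTV_2020_2}) applies verbatim and produces \eqref{dis_ad_multi_1} (resp.\ \eqref{dis_ad_multi_2}) with exactly the $\mP^n_{\iph,j}$ (resp.\ $\mQ^{\nph}_{i,\jph}$) of the statement. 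The only real care needed throughout is bookkeeping the $\D$-independence of constants: every estimate ultimately rests on $\TV_{\D x,\D y}(\beta^n)\le\TV_{\D x,\D y}(\beta^0)$, so the substantive content lies in Lemma~\ref{beta_tvd_multi}, already established, and the one mild subtlety --- reconciling the $\D x$- versus $\D y$-weights when the two sweeps are combined --- is handled by the $L^1$-scaling used above.
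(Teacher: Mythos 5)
Your proof follows the paper's argument essentially verbatim: parts (i)--(ii) via the affine structure $\beta=au+r$ combined with Lemma~\ref{beta_tvd_multi}, and parts (iii)--(iv) by applying the one-dimensional estimates \eqref{time_cont} and \eqref{ent_discrete_A} sweep-by-sweep with the transverse index frozen, which is exactly what the paper does (it merely cites those results without the detail you supply). The only caveat concerns (iii): what you actually establish is the $L^1$-scaled bound $\D x\,\D y\sum_{i,j}\abs{u^{n+1}_{ij}-u^{n}_{ij}}\le C\,\D t$ rather than the unweighted sum printed in \eqref{time_cont_multi} (which, taken literally with a mesh-independent constant, would acquire a $1/\D y$ factor); since the scaled form is precisely what is needed later for \eqref{nu} and is what the row- and column-wise use of \eqref{time_cont} naturally yields, this discrepancy lies in the paper's normalization rather than in your argument.
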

\begin{proof}
We have
\begin{eqnarray}
\abs{\beta_{ij}^0-\beta_{i,j-1}^0}\leq a\abs{u_{ij}^0-u_{i,j-1}^0} +\abs{ r_{ij}-r_{i,j-1}},\\
\abs{\beta_{ij}^0-\beta_{i-1,j}^0}\leq a\abs{u_{ij}^0-u_{i-1,j}^0} +\abs{ r_{ij}-r_{i-1,j}}.
\end{eqnarray}
Thus for $\TV_{\D x, \D y} (u^0) < \infty $ and $ \TV_{\D x, \D y}(r) < \infty,$  Lemma \ref{beta_tvd_multi} implies, 
\begin{eqnarray*}
\TV _{\D x, \D y}(\beta^{n/2})&=&\Delta y\sum\limits_{i,j \in \mathbb{Z}} |\beta^{n/2}_{ij}-\beta^{n/2}_{i-1j}|+\Delta x \sum\limits_{i,j \in \mathbb{Z}} |\beta^{n/2}_{ij}-\beta^{n/2}_{i,j-1}| \\&\leq& a \TV_{\D x,\D y} TV(u^0)+ \TV_{\D x, \D y} (r).
\end{eqnarray*}
This proves \eqref{tvbeta0}.\\
Consider,
\begin{eqnarray*}
\abs{u_{ij}^{n/2}-u_{i,j-1}^{n/2}} \leq \frac{1}{a}\left[\abs{\beta_{ij}^{n/2}-\beta_{i,j-1}^{n/2}} + \abs{r_{ij}-r_{i,j-1}}  \right],\\
\abs{u_{ij}^{n/2}-u_{i-1,j}^{n/2}} \leq \frac{1}{a} \left[\abs{\beta_{ij}^{n/2}-\beta_{i-1,j}^{n/2}} + \abs{r_{ij}-r_{i,j-1}}  \right].
\end{eqnarray*}
Thus, 
\begin{eqnarray}\TV _{\D x, \D y} (u^{n/2}) \leq  \frac{1}{a} \TV_{\D x, \D y}(\beta) +\TV_{\D x, \D y} (r).
\end{eqnarray} Thus \eqref{tvu_multi} follows.
The proof of \eqref{time_cont_multi} follows from \eqref{time_cont}. The proof of the discrete entropy inequalities
\eqref{dis_ad_multi_1}-\eqref{dis_ad_multi_2} can be obtained using \eqref{ent_discrete_A}.
\end{proof}

\begin{theorem}\label{theorem:conv_multi}
	Assume that the flux function $\vb{A}(\vb{x},u)=\vb{g}(\beta(\vb{x},u))$ satisfies   Assumptions \descref{C1}{C-1} and \descref{C2}{C-2}, 
	and that $u_0\in \BV(\R^d)$. 

	Then as the mesh size $\D \rightarrow 0$, the approximations $u^{\D}$ generated by the Godunov scheme described 
	above converge in $L^1_{\loc}(Q)$ and pointwise a.e. in $Q$ to the unique adapted entropy solution 
	$u \in L^{\infty}(Q) \cap C([0,T]:L^1_{\loc}(\R^d))$ corresponding to the Cauchy problem \eqref{eq:discont}, 
	\eqref{eq:data} with initial data $u_0$. In addition, the total variation $u(\cdot,t)$ is uniformly  bounded for $t\geq 0$.
\end{theorem}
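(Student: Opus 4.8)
The proof parallels that of the one-dimensional Theorem \ref{theorem2}, the only genuinely new work being the control of the directional splitting. The plan is: (i) extract a convergent subsequence by compactness; (ii) identify the limits of the intermediate-time states and of the numerical fluxes; (iii) pass to the limit in the discrete adapted-entropy inequalities \eqref{dis_ad_multi_1}--\eqref{dis_ad_multi_2}; and (iv) invoke the uniqueness Theorem \ref{theorem1} to upgrade subsequential convergence to convergence of the whole family. For compactness, Lemma \ref{lemma:A_u_bound} gives a uniform $L^{\infty}$ bound, while Lemma \ref{lemma:TV_u_multi} gives a spatial total-variation bound uniform in $n$ and $\D$ together with the discrete time-continuity estimate \eqref{time_cont_multi}; a Kolmogorov--Riesz compactness argument then yields a subsequence $\D\to0$ along which $u^{\D}\to u$ in $L^1_{\loc}(Q)$ and a.e.\ in $Q$. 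The $L^{\infty}$ and spatial-BV bounds pass to the limit, so $u\in L^{\infty}(Q)$ with $u(\cdot,t)$ of uniformly bounded variation, and the time-continuity estimate gives $u\in C([0,T];L^1_{\loc}(\R^2))$ with $u(0,\cdot)=u_0$. Since $r\in\BV(\R^2)$, the piecewise-constant functions $k^{\D}_{\alpha}$ and $\beta^{\D}$ converge in $L^1_{\loc}$ to $k_{\alpha}(\vb{x})=\beta^{-1}(\vb{x},\alpha)$ and to $\beta(\vb{x},u)$ respectively.

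For the splitting, the $x$-sweep \eqref{xdir} and the Lipschitz bound \eqref{h_lip} give $\abs{u_{ij}^{\nph}-u_{ij}^n}\le\lambda_x L_{g_1}\big(\abs{\beta_{i+1,j}^n-\beta_{ij}^n}+\abs{\beta_{ij}^n-\beta_{i-1,j}^n}\big)$, whence $\D x\,\D y\sum_{i,j}\abs{u_{ij}^{\nph}-u_{ij}^n}\le C\,\D t$ by the uniform bound on $\TV_{\D x,\D y}(\beta^{n/2})$ coming from Lemmas \ref{beta_tvd_multi} and \ref{lemma:TV_u_multi}; the $y$-sweep is symmetric. Hence the interpolant of the intermediate states $\{u_{ij}^{\nph}\}$ shares the limit $u$, and $\beta^{\D}$ built from $\{\beta_{ij}^{\nph}\}$ also converges a.e.\ to $\beta(\vb{x},u)$. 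Using the consistency $\bar g_l(p,p)=g_l(p)$ together with the spatial BV bound on $\beta^{\D}$ to discard the off-diagonal contributions, exactly as in one dimension, the numerical fluxes $\bar g_l(\beta_{ij}^{n/2},\beta_{i\pm1,j}^{n/2})$ and $\bar g_l(\beta_{ij}^{n/2},\beta_{i,j\pm1}^{n/2})$ converge a.e.\ to $g_l(\beta(\vb{x},u))=A_l(\vb{x},u)$.

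To pass to the limit, fix $\alpha\in\R$ and $0\le\phi\in C_c^{\infty}(Q)$ and set $\phi_{ij}^n=\phi(t^n,x_i,y_j)$; add \eqref{dis_ad_multi_1} to \eqref{dis_ad_multi_2}, multiply by $\D x\,\D y\,\phi_{ij}^n$, sum over $i,j\in\Z$ and $n\ge0$, and sum by parts in $n$, $i$, and $j$. Telescoping in $n$ turns the left-hand side into a discrete analogue of $-\int_Q\abs{u-k_{\alpha}}\phi_t\,d\vb{x}\,dt-\int_{\R^2}\abs{u_0-k_{\alpha}}\phi(0,\cdot)\,d\vb{x}$; the $\mP$-term becomes a discrete analogue of $-\int_Q\sgn(u-k_{\alpha})(A_1(\vb{x},u)-g_1(\alpha))\phi_{x_1}$, and the $\mQ$-term of $-\int_Q\sgn(u-k_{\alpha})(A_2(\vb{x},u)-g_2(\alpha))\phi_{x_2}$, the $O(\D t)$ increment bound of the previous paragraph being precisely what lets us replace $\phi_{ij}^{\nph}$ by $\phi_{ij}^n$ in the $\mQ$-contribution with vanishing error. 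Each fixed $j$-row (respectively $i$-column) is then treated by the one-dimensional flux estimates used for Theorem \ref{theorem2}, recalling that $k^{\alpha}_{ij}=(\alpha-r_{ij})/a$ is a discrete steady state of each sweep since $\beta(x_i,y_j,k^{\alpha}_{ij})\equiv\alpha$. Letting $\D\to0$ along the subsequence and invoking the convergences established above, we obtain \eqref{E2} for every $\alpha$ and every admissible $\phi$, so $u$ is an adapted entropy solution; by Theorem \ref{theorem1} the whole family $u^{\D}$ converges to it, and the uniform bound on the total variation of $u(\cdot,t)$ was already obtained in the compactness step.

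The delicate point is the splitting: one must show that the alternating $x$- and $y$-sweeps reassemble, in the limit, into the genuinely two-dimensional inequality \eqref{E2} rather than into two unrelated one-dimensional statements. The crux is the $O(\D t)$ bound on $\norma{u^{\D}(\cdot,t^{\nph})-u^{\D}(\cdot,t^n)}_{L^1_{\loc}}$, which identifies the limit of the intermediate states with $u$; this bound rests on the $\beta$-TVD estimate of Lemma \ref{beta_tvd_multi} and, crucially, on the affine hypothesis \descref{C2}{C-2}, which converts the available BV control of $\beta$ into BV control of $u$ (without it no BV bound on $u$ is available at all, cf.\ the discussion in the introduction). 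The remainder is the one-dimensional bookkeeping already carried out in \cite{GTV_2020_2}.
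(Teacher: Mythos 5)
Your proposal is correct and follows essentially the same route as the paper: compactness from the $L^\infty$, spatial BV and time-continuity bounds of Lemmas \ref{lemma:A_u_bound} and \ref{lemma:TV_u_multi}, passage to the limit in the discrete adapted-entropy inequalities \eqref{dis_ad_multi_1}--\eqref{dis_ad_multi_2} via the Crandall--Majda dimensional-splitting argument, then uniqueness (Theorem \ref{theorem1}) and lower semicontinuity of the total variation. The only difference is that the paper simply cites \cite{CM_1980} for the splitting step, whereas you spell out its content (the $O(\D t)$ identification of the half-step states and the Lax--Wendroff-type summation by parts), which is consistent with what the paper intends.
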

\begin{proof}
From  the spatial variation bound on $\{u_{ij}^n \}$ and the time continuity estimate obtained in Lemma~\ref{lemma:TV_u_multi},
 we have convergence of the
approximations $u^{\D}$ along a subsequence in $L^1_{\textrm{loc}}(Q)$ and boundedly a.e. to 
some $u \in L^{\infty}(Q) \cap C([0,T]:L^1_{\loc}(\R^d))$.
Since the scheme satisfies the discrete adapted entropy inequality \eqref{dis_ad_multi_1}-\eqref{dis_ad_multi_2}, we can invoke the dimensional splitting arguments of Crandal-Majda \cite{CM_1980}, in the adapted entropy set up to show that the limit indeed satisfies the adapted entropy condition. 

By Lemma~\ref{lemma:TV_u_multi},
we have a spatial variation bound on $u^{\D}(\cdot,t)$ which is independent of the mesh size, i.e.,
	for some $\mK_6>0$ independent of the mesh size $\D$,
	\begin{equation}\label{u_bv_bound}
	\TV(u^{\D}(\cdot,t))  \le \mK_6.
	\end{equation}
	Since $\TV(u(\cdot,t)) \le \liminf \TV(u^{\D}(\cdot,t))$, we also have $\TV(u(\cdot,t)) \le \mK_6.$
\end{proof}
\section{Error Estimates} \label{sec_error_est}
In this section, we estimate the rate of convergence of the numerical methods introduced in the previous section. The idea is to prove the Kuznetsov type lemma based on the adapted entropy formulation. We begin by listing some of the technical tools required to prove the Kuznetsov lemma. We assume that $u_0,r \in \BV(\R^d) \cap L^1(\R^d)$ and  the fluxes satisfy the assumptions detailed in the previous section.

  \begin{definition}
 Let $\Pi_T=\R^d \times [0,T].$ We define $\Phi^{\eta,\epsilon}: {\Pi_T}^2 \rightarrow \R$ by,  \begin{eqnarray*}
\Phi^{\eta,\epsilon}(t,\vb{x},s,\vb{y})=\omega_{\epsilon}(t-s)\omega_{\eta}(\vb{x}-\vb{y}),
\end{eqnarray*}
where  for $\vb{z}\in \R^d,$  $\omega_{\eta}(\vb{z}):=\frac{1}{{\eta}^d}\prod_{i=1}^{d}\omega\Big(\frac{z_i}{\eta}\Big)$ is a mollifier such that $\omega\in C^{\infty}(\R;\R)$ is an even function and satisfies the following:
 \begin{eqnarray}\spt(w) \subset [0,1], \quad  0\leq \omega(z)\leq 1 \, \text{ and } 
 \int\limits_{\R}w(z)d z=1.
\end{eqnarray}
\end{definition}
For further calculations, we note the following properties of $\Phi^{\eta,\epsilon}$:
\begin{enumerate}
    \item \begin{eqnarray}\label{S1} \Phi^{\eta,\epsilon}_{x_i}&=&\frac{\partial}{\partial x_i}{\Phi^{\eta,\epsilon}}(t,\vb{x},s,\vb{y})=\omega_{\epsilon}(t-s) \omega^{'}_{\eta}(x_i-y_i)\prod_{j \neq i}\omega_{\eta}(y_j-y_j) \\\nonumber&& \quad \quad \quad \quad \quad \quad \quad \quad\quad \quad \quad=-\frac{\partial}{\partial y_i}\Phi^{\eta,\epsilon}(t,\vb{x},s,\vb{y})=-\Phi^{\eta,\epsilon}_{y_i}.\end{eqnarray}
    \item \begin{eqnarray}\label{S2}\Phi^{\eta,\epsilon}_t=\frac{\partial}{\partial t}\Phi^{\eta,\epsilon}(t,\vb{x},s,\vb{y})=\omega^{'}_{\epsilon}(t-s)\omega_{\eta}( \vb{x}-\vb{y})=- \frac{\partial}{\partial s}{\Phi^{\eta,\epsilon}}(t,\vb{x},s,\vb{y})=-\Phi^{\eta,\epsilon}_s.\end{eqnarray}
\item 
\begin{equation}
\begin{aligned}[t]\label{S3}\Phi^{\eta,\epsilon}(t,\vb{x},s,\vb{y})=\Phi^{\eta,\epsilon}(t,\vb{y},s,\vb{x})=\Phi^{\eta,\epsilon}(s,\vb{x},t,\vb{y})=\Phi^{\eta,\epsilon}(s,\vb{y},t,\vb{x}).\end{aligned}\end{equation}
\item \begin{eqnarray}\label{S4}\int\limits_{\R^d}w_{\eta}( \vb{x}-\vb{y})d \vb{y}=1 \text{ and } \int\limits_0^T w_{\epsilon}(t-s)ds \leq 1, \quad \text{ for all }\vb{x}\in \R^d, t\geq 0, \end{eqnarray}
\item There exists $C$ independent of $\eta$ and $\epsilon$ such that, \begin{eqnarray}\label{S5}\int\limits_{\R^d}| \partial_{x_i}w_{\eta}(\vb{x}-\vb{y})|d\vb{y}\leq \frac{C}{\eta} \text{ and } \int\limits_0^T |w'_{\epsilon}|(t-s)ds \leq \frac{C}{\epsilon}, \quad \text{ for all }\vb{x}\in \R^d, t\geq 0. \end{eqnarray}
\end{enumerate}
\begin{definition}For $\sigma >0,$ define the following
\begin{enumerate}[i.]
    \item $\kappa:=\{u:\Pi_T\rightarrow \R: ||u(\cdot, t)||_{L^{\infty}}\le  k, |u(\cdot,t)|_{\BV} \leq k\}.$
      
    \item $\nu_t(u,\sigma) :=\sup_{|\tau| \leq \sigma}||u(t+\tau)-u(t)||_1.$
    \item $\nu(u,\sigma):=\sup\limits_{0<t<T}\nu_t(u,\sigma)=\sup\limits_{t\in(0,T)}\sup_{|\tau| \leq \sigma}||u(t+\tau)-u(t)||_1.$
\end{enumerate}
\end{definition}
\textbf{Remark:} If $u_0 \in \BV(\R^d) \cap L^1(\R^d)$ then there exists $L$ such that adapted entropy solution satisfies $\nu(u,\sigma) \leq L\sigma.$

    \begin{definition}\begin{eqnarray}\label{W1}
    \wedge_T(u,\phi,k_{\alpha})&:=&
    \int_{\Pi_T}\Big( |u(t,\vb{x})-k_\alpha(\vb{x})|\phi_{t}+\sum\limits_{i=1}^d\sgn (u(t,\vb{x})-k_\alpha(\vb{x})) \Big(A_i(\vb{x},u(t,\vb{x}))-\alpha\Big)\phi_{x_i}\Big)  d\vb{x} d{t} \nonumber\\
    &&\label{W2}-\int_{\R^d}|u(T,\vb{x})-k_\alpha(\vb{x})|\phi(T,\vb{x}) d\vb{x}+\int_{\R^d}|u_0(\vb{x})-k_\alpha(\vb{x})|\phi(0,\vb{x}) d\vb{x}. \\
   \displaystyle\wedge_{\eta,\epsilon}(u,v)&:=&\int_{\Pi_T}\wedge_T(u(\cdot,\cdot),\phi^{\eta,\epsilon}(\cdot,\cdot,s,\vb{y}), \tilde{v}(s,\vb{y},\vb{x})) d\vb{y} ds.\\ \label{W3}
   \displaystyle\wedge_{\eta,\epsilon}(v,u)&:=&\int_{\Pi_T}\wedge_T(v(\cdot,\cdot),\Phi^{\eta,\epsilon}(t,\vb{x},\cdot,\cdot), \tilde{u}(t,\vb{x},\vb{y})) d\vb{x} dt. 
    \end{eqnarray}\end{definition}

\begin{lemma}\label{kuznetsov_adapted}
Let $v$ be the solution of IVP \eqref{eq:discont}-\eqref{eq:data}  and $u \in \kappa. $ For $0<\epsilon<T$ and $\eta >0,$ then
\begin{eqnarray}\label{kza}
 \norma{u(\cdot,T)-v(\cdot,T)}_{L^1(\R^d)} &\leq&  \norma{u_0-v_0}_{L^1(\R^d)}+ C \Big[L\epsilon +\TV(r)|\eta|+ \TV(v)|\eta|\\ &&  \,\,\,\,+ \nu(u, \epsilon)\Big]-\wedge_{\eta,\epsilon}(u,v).\nonumber
\end{eqnarray}
where $C$ is independent of the mesh size $\D.$
\end{lemma}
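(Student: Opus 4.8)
The plan is to run a Kuznetsov–type doubling–of–variables estimate in which the role of the Kruzkov constants is played by the stationary states $k_\alpha$ and the functions $\tilde u,\tilde v$, exactly as in the uniqueness proof of Theorem~\ref{theorem1}, but now tracking every mollification error explicitly. First I would use that $v$ is an adapted entropy solution: since $v\in C([0,T];L^1_{\loc}(\R^d))$ the entropy inequality holds in the form carrying a terminal datum, $\wedge_T(v,\phi,k_\alpha)\ge 0$ for all $0\le\phi\in C_c^\infty$ and all $\alpha\in\R$ (cf.\ \eqref{E2}). Applying this with $\phi(s,\vb{y})=\Phi^{\eta,\epsilon}(t,\vb{x},s,\vb{y})\ge 0$ and $\alpha=\beta(\vb{x},u(t,\vb{x}))$, so that $k_\alpha(\vb{y})=\tilde u(t,\vb{x},\vb{y})$ (here $\alpha$ lies in a fixed bounded range because $\|u\|_{L^\infty}\le k$ for $u\in\kappa$), and integrating over $(t,\vb{x})\in\Pi_T$, gives $\wedge_{\eta,\epsilon}(v,u)\ge 0$. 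Hence it suffices to prove
\[
\norma{u(\cdot,T)-v(\cdot,T)}_{L^1(\R^d)}+\wedge_{\eta,\epsilon}(u,v)+\wedge_{\eta,\epsilon}(v,u)\le \norma{u_0-v_0}_{L^1(\R^d)}+C\bigl(L\epsilon+\TV(r)\,\eta+\TV(v)\,\eta+\nu(u,\epsilon)\bigr),
\]
after which \eqref{kza} follows by discarding the nonnegative term $\wedge_{\eta,\epsilon}(v,u)$.

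To establish this, I would add $\wedge_{\eta,\epsilon}(u,v)$ and $\wedge_{\eta,\epsilon}(v,u)$ and use the symmetry relations \eqref{S1}--\eqref{S3}. The decisive structural fact is Assumption \descref{C2}{C-2}: since $\beta(\vb{x},u)=au+r(\vb{x})$, one has $\beta^{-1}(\vb{x},\alpha)=(\alpha-r(\vb{x}))/a$, hence
\[
\tilde v(s,\vb{y},\vb{x})=v(s,\vb{y})+\frac{r(\vb{y})-r(\vb{x})}{a},\qquad \tilde u(t,\vb{x},\vb{y})=u(t,\vb{x})+\frac{r(\vb{x})-r(\vb{y})}{a},
\]
and consequently $u(t,\vb{x})-\tilde v(s,\vb{y},\vb{x})=-\bigl(v(s,\vb{y})-\tilde u(t,\vb{x},\vb{y})\bigr)$ \emph{pointwise}. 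This yields $\abs{u(t,\vb{x})-\tilde v(s,\vb{y},\vb{x})}=\abs{v(s,\vb{y})-\tilde u(t,\vb{x},\vb{y})}$ and $\sgn(u(t,\vb{x})-\tilde v(s,\vb{y},\vb{x}))=-\sgn(v(s,\vb{y})-\tilde u(t,\vb{x},\vb{y}))$; combining these with $\Phi^{\eta,\epsilon}_t=-\Phi^{\eta,\epsilon}_s$, $\Phi^{\eta,\epsilon}_{x_i}=-\Phi^{\eta,\epsilon}_{y_i}$ and the identity $g_i(\beta(\vb{y},v(s,\vb{y})))=A_i(\vb{y},v(s,\vb{y}))$, the interior time–derivative integrals cancel in pairs (their integrand is $(\abs{u-\tilde v}-\abs{v-\tilde u})\Phi^{\eta,\epsilon}_t\equiv 0$) and the interior flux integrals cancel as well. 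Only the boundary terms survive, so $\wedge_{\eta,\epsilon}(u,v)+\wedge_{\eta,\epsilon}(v,u)=\mathcal I_0-\mathcal I_T$, where $\mathcal I_0,\mathcal I_T\ge 0$ collect the contributions at $t=s=0$ and $t=s=T$ respectively.

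Next I would estimate these boundary terms. In $\mathcal I_T$ — the sum of $\int\abs{u(T,\vb{x})-\tilde v(s,\vb{y},\vb{x})}\Phi^{\eta,\epsilon}(T,\vb{x},s,\vb{y})$ and its symmetric counterpart — the reverse triangle inequality extracts the leading quantities $\abs{u(T,\vb{x})-v(T,\vb{x})}$ and $\abs{v(T,\vb{y})-u(T,\vb{y})}$; since $0<\epsilon<T$, the temporal mollifier sits with full mass near $t=T$, so these integrate (via \eqref{S4}) to $\norma{u(\cdot,T)-v(\cdot,T)}_{L^1}$. Each leftover is absorbed by one of three mechanisms: (i) the term $\frac1a\abs{r(\vb{x})-r(\vb{y})}$ coming from replacing $\tilde v,\tilde u$ by $v,u$, which against $\omega_\eta$ costs $\le C\eta\,\TV(r)$; (ii) spatial translations of $v$ and of $u$ at scale $\eta$, controlled by $C\eta\,\TV(v(\cdot,T))$ and by the uniform spatial $\BV$ bound built into $\kappa$; and (iii) temporal increments over an interval of length $\epsilon$, controlled by $\nu(v,\epsilon)\le L\epsilon$ (the Remark following the definition of $\nu$) and by $\nu(u,\epsilon)$, which is kept as is. The term $\mathcal I_0$ is treated identically, yielding $+\norma{u_0-v_0}_{L^1}$ plus errors of the same three types. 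The tools are the mollifier bounds \eqref{S4}--\eqref{S5}, the $\BV$ translation inequality $\norma{w(\cdot+\vb{z})-w}_{L^1}\le\abs{\vb{z}}\,\TV(w)$, and the uniform–in–time spatial $\BV$ bound on $v$ from Theorem~\ref{theorem:conv_multi}. Summing all contributions gives the displayed inequality above, hence \eqref{kza}.

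The hard part is the \emph{exact} pairwise cancellation of the interior integrals in the second step: it is available only because $\beta$ is affine in $u$ (Assumption \descref{C2}{C-2}). For a general monotone $\beta$ one would have $\abs{u-\tilde v}\ne\abs{v-\tilde u}$ together with a mismatch between the two flux differences, and the resulting defect — which does not in general vanish as $\eta,\epsilon\to 0$ — would be the genuine obstruction, requiring extra structure to control. A lesser but still delicate point is the bookkeeping that keeps the coefficients of $\norma{u_0-v_0}_{L^1}$ and of $\norma{u(\cdot,T)-v(\cdot,T)}_{L^1}$ equal to exactly $1$, which is precisely what forces the hypothesis $0<\epsilon<T$ and is otherwise routine.
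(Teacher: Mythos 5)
Your proposal is correct and follows essentially the same route as the paper's proof: use $\wedge_{\eta,\epsilon}(v,u)\ge 0$, add the two Kuznetsov functionals so that the interior flux terms cancel by the antisymmetry \eqref{S1}--\eqref{S3} and the interior time-derivative terms cancel because the affine structure of Assumption \descref{C2}{C-2} gives $\abs{u-\tilde v}=\abs{v-\tilde u}$ (the paper's claim $\mathcal{C}=0$), and then estimate the surviving initial and terminal boundary terms via the triangle inequality, the mollifier bounds, $\TV(r)$, $\TV(v)$, $L\epsilon$ and $\nu(u,\epsilon)$. The only cosmetic difference is organizational (the paper isolates the $w'_{\epsilon}$ term as $\mathcal{C}$ and shows it vanishes, while you cancel it pointwise at the outset), so no substantive comparison is needed.
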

\begin{proof}

  Adding $\displaystyle\wedge_{\eta,\epsilon}(v,u)$ and $\displaystyle\wedge_{\eta,\epsilon}(u,v),$ we get the following
\begin{eqnarray*}
&&{\color{black}\displaystyle\wedge_{\eta,\epsilon}(v,u)}+ {\color{black}\displaystyle\wedge_{\eta,\epsilon}(u,v)}\\
&&\int_{\Pi_T}\Big( |u(t,\vb{x})-\tilde{v}(\vb{y},s,\vb{x})|\Phi^{\eta,\epsilon}_{t} d\vb{x} d{t} d\vb{y} ds\\
&+&\int_{\Pi_T}\left[\sum\limits_{i=1}^d\left( \sgn (u(t,\vb{x})-\tilde{v}(\vb{y},s,\vb{x})) (A_i(\vb{x},u(\vb{x},t))-A_i(\vb{y},v(\vb{y},s)))\right) \right]\Phi^{\eta,\epsilon}_{x_i} d\vb{x} dt d\vb{y} ds\\
    &-&\int_{\Pi_T}\int_{\R^d}|u(T,\vb{x})-\tilde{v}(s,\vb{y},\vb{x})|\Phi^{\eta,\epsilon}(\vb{x},T,\vb{y},s)d\vb{x} d\vb{y} ds\\
    &+&\int_{\Pi_T}\int_{\R^d}|u_0(\vb{x})-\tilde{v}(s,\vb{y},\vb{x})|\Phi^{\eta,\epsilon}(\vb{x},0,\vb{y},s)d\vb{x} d\vb{y} ds+\int_{\Pi_T}\Big( |v(s,\vb{y})-\tilde{u}(t,\vb{x},\vb{y})|\Phi^{\eta,\epsilon}_{s} d\vb{y} ds d\vb{x} dt\\
    &+&\int_{\Pi_T}
    \left[ \sum\limits_{i=1}^d\sgn (v(s,\vb{y})-\tilde{u}(t,\vb{x},\vb{y})) (A_i(\vb{y},v(s,\vb{y}))-A_i(\vb{x},u(t,\vb{x})))\right]\Phi^{\eta,\epsilon}_{y_i}d\vb{y} d{s} d\vb{x} dt\\
    &-&\int_{\Pi_T}\int_{\R^d}|v(T,\vb{y})-\tilde{u}(t,\vb{x},\vb{y})|\Phi^{\eta,\epsilon}(\vb{x},t,\vb{y},T)d\vb{y}  d\vb{x} dt\\
   & +&\int_{\Pi_T}\int_{\R^d}|v_0(\vb{y})-\tilde{u}(t,\vb{x},\vb{y})|\Phi^{\eta,\epsilon}(\vb{x},t,\vb{y},0)d\vb{y}  d\vb{x} dt.
\end{eqnarray*}
From \eqref{S1}, terms involving $\Phi^{\eta,\epsilon}_{x_i}$ and $\Phi^{\eta,\epsilon}_{y_i}$ cancel each other. Now invoking symmetry of $\Phi^{\eta,\epsilon}$ given by \eqref{S1}--\eqref{S3}, we have the following
$$\wedge_{\eta,\epsilon}(u,v)=-\wedge_{\eta,\epsilon}(v,u)-\mathcal{A}+\mathcal{B}+\mathcal{C},$$ where,
 \begin{eqnarray*}
 \mathcal{A} &=&\int_{\Pi_T}\int_{\R^d}\Big(|u(T,\vb{x})-\tilde{v}(s,\vb{y},\vb{x})|+|v(T,\vb{y})-\tilde{u}(t,\vb{x},\vb{y})|\Big)\Phi^{\eta,\epsilon}(\vb{x},s,\vb{y},T) d\vb{y} d\vb{x} ds\\ 
 &=&\int_{0}^T w_{\epsilon}(T-s)\int_{\R^{2d}}\Big(|u(t,\vb{x})-\tilde{v}(s,\vb{y},\vb{x})|+|v(T,\vb{y})-\tilde{u}(t,\vb{x},\vb{y})|\Big)w_{\eta}( \vb{x}-\vb{y})d\vb{y}  d\vb{x} ds.\\ \mathcal{B} &=&\int_{\Pi_T}\int_{\R^d}\Big(|u_0(\vb{x})-\tilde{v}(s,\vb{y},\vb{x})|+|v_0(\vb{y})-\tilde{u}(t,\vb{x},\vb{y})|\Big)\Phi^{\eta,\epsilon}(\vb{x},s,\vb{y},0) d\vb{x}  d\vb{y} ds\\ &=&\int_{0}^T w_{\epsilon}(T-s)\int_{\R^{2d}}\Big(|u_0(\vb{x})-\tilde{v}(s,\vb{y},\vb{x})|+|v_0(\vb{y})-\tilde{u}(t,\vb{x},\vb{y})|\Big)w_{\eta}( \vb{x}-\vb{y}) d\vb{x}  d\vb{y} ds. \\
\mathcal{C} &=&\int\limits_{{\Pi_T}^2}\Big( |u(t,\vb{x})-\tilde{v}(s,\vb{y},\vb{x})|-|v(s,\vb{y})-\tilde{u}(t,\vb{x},\vb{y})|\Big) w'_{\epsilon}(t-s) w_{\eta}( \vb{x}-\vb{y}) d\vb{x} d\vb{y} ds dt.
 \end{eqnarray*}
 since $v$ is the solution, $\wedge_{\eta,\epsilon}(v,u)\ge0,$ implying that \begin{eqnarray}\label{ABC}\mathcal{A}\le \mathcal{B} + \mathcal{C} -\wedge_{\eta,\epsilon}(u,v).\end{eqnarray}
\begin{enumerate}[label=\textbf{Claim} \Roman*]
\item We have the following lower bound on $\mathcal{A}:$

\begin{eqnarray}\label{A}
\mathcal{A} \geq \norma{u(\cdot,T)-v(\cdot,T)}_{L^1(\R^d)}- C \Big( L\epsilon + \TV(r)|\eta|+\TV(v) |\eta|+\nu(u, \epsilon) \Big).
\end{eqnarray}
\newline
To prove the claim we make the following estimates.
 \begin{enumerate}
\item Estimation of $|u(T,\vb{x})-\tilde{v}(s,\vb{y},\vb{x})|$:
 \\
 Consider,
\begin{eqnarray*}
|u(T,\vb{x})-v(T,\vb{x})|&=&|u(T,\vb{x})-\tilde{v}(s,\vb{y},\vb{x})+\tilde{v}(s,\vb{y},\vb{x})-\tilde{v}(T,\vb{y},\vb{x})+\tilde{v}(T,\vb{y},\vb{x})-v(T,\vb{x})|\\
&\leq& |u(T,\vb{x})-\tilde{v}(s,\vb{y},\vb{x})|+|\tilde{v}(s,\vb{y},\vb{x})-\tilde{v}(T,\vb{y},\vb{x})|+|\tilde{v}(T,\vb{y},\vb{x})-v(T,\vb{x})|.
\end{eqnarray*}
Thus we have,
\begin{eqnarray*}
|u(T,\vb{x})-\tilde{v}(s,\vb{y},\vb{x})| &\geq& |u(T,\vb{x})-v(T,\vb{x})|-|\tilde{v}(s,\vb{y},\vb{x})-\tilde{v}(T,\vb{y},\vb{x})|-|\tilde{v}(T,\vb{y},\vb{x})-v(T,\vb{x})|.
\end{eqnarray*}Using the definition of $\tilde{v}$ we get,
\begin{eqnarray*}
|\tilde{v}(T,\vb{y},\vb{x})-\tilde{v}(s,\vb{y},\vb{x})|&=&|\beta^{-1}(\vb{x},\beta(\vb{y},v(T,\vb{y})))-\beta^{-1}(\vb{x},\beta(\vb{y},v(s,\vb{y})))|
\\&\leq& \frac{1}{a}|\beta(\vb{y},v(T,\vb{y}))-\beta(\vb{y},v(s,\vb{y}))|\\&\leq & |v(T,\vb{y})-v(s,\vb{y})|.
\end{eqnarray*} 
Invoking the properties of $\beta,$ we get the following\begin{eqnarray*}
|\tilde{v}(T,\vb{y},\vb{x})-\tilde{v}(T,\vb{x},\vb{x})|&=& |\beta^{-1}(\vb{x},\beta(\vb{y},v(T,\vb{y})))-\beta^{-1}(\vb{x},\beta(\vb{x},v(T,\vb{x})))|\\
&\leq & \frac{1}{a}|\beta(\vb{y},v(T,\vb{y}))-\beta(\vb{x},v(T,\vb{x}))|\\
&=& \frac{1}{a}|\beta(\vb{y},v(T,\vb{y}))-\beta(\vb{x},v(T,\vb{y}))+\beta(\vb{x},v(T,\vb{y}))-\beta(\vb{x},v(T,\vb{x}))|.\\
&\leq & \frac{1}{a}|r(\vb{x})-r(\vb{y})|+ |v(T,\vb{y})-v(T,\vb{x})|.
\end{eqnarray*} Combining all these estimates we get,
\begin{eqnarray}\label{a} |u(T,\vb{x})-\tilde{v}(s,\vb{y},\vb{x})| &\geq& |u(T,\vb{x})-v(T,\vb{x})|- |v(T,\vb{y})-v(s,\vb{y})|\nonumber\\&& \quad \quad -\left[\frac{1}{a}|r(\vb{x})-r(\vb{y})|+ |v(T,\vb{y})-v(T,\vb{x})|\right].\end{eqnarray}

\item Estimation of  $|v(T,\vb{y})-\tilde{u}(t,\vb{x},\vb{y})|$:\\
Consider $|u(T,\vb{x})-v(T,\vb{x})|$, add and subtract $\tilde{u}(s,\vb{x},\vb{y})$ and $v(T,\vb{y})=\tilde{v}(T,\vb{y},\vb{y})$ to get,
\begin{eqnarray*}
|u(T,\vb{x})-v(T,\vb{x})|&=&|u(T,\vb{x})-\tilde{u}(s,\vb{x},\vb{y})+\tilde{u}(s,\vb{x},\vb{y})-v(T,\vb{y})+v(T,\vb{y})-v(T,\vb{x})|\\
&\leq &|u(T,\vb{x})-\tilde{u}(s,\vb{x},\vb{y})|+|\tilde{u}(s,\vb{x},\vb{y})-v(T,\vb{y})|+|v(T,\vb{y})-v(T,\vb{x})|.
\end{eqnarray*}
Thus we have,
\begin{eqnarray*}
 |\tilde{u}(s,\vb{x},\vb{y})-v(T,\vb{y})| \geq |u(T,\vb{x})-v(T,\vb{x})|- |u(T,\vb{x})-\tilde{u}(s,\vb{x},\vb{y})| - |v(T,\vb{y})-v(T,\vb{x})|.
\end{eqnarray*}
\begin{eqnarray*}
|u(T,\vb{x})-\tilde{u}(s,\vb{x},\vb{y})|&=&|\beta^{-1}(\vb{x},\beta(\vb{x},u(T,\vb{x})))-\beta^{-1}(\vb{y},\beta(\vb{x},u(s,\vb{x})))|\\
&\leq& \frac{1}{a}|r(\vb{x})-r(\vb{y})|+ |u(T,\vb{x})-u(s,\vb{x})|.
\end{eqnarray*}
Combining all these estimates we get, \begin{eqnarray}\label{b}|v(T,\vb{y})-\tilde{u}(t,\vb{x},\vb{y})|&\geq& |u(T,\vb{x})-v(T,\vb{x})| - |v(T,\vb{y})-v(T,\vb{x})|\nonumber\\&&\quad \quad \quad-\left[\frac{1}{a}|r(\vb{x})-r(\vb{y})|+ |u(T,\vb{x})-u(s,\vb{x})|\right].\end{eqnarray}
\end{enumerate}

Adding \eqref{a} and \eqref{b}, for some $C>0$ we get the following estimate:
\begin{eqnarray*}
|u(T,\vb{x})-\tilde{v}(s,\vb{y},\vb{x})|+|\tilde{u}(s,\vb{x},\vb{y})-v(T,\vb{y})|
&\geq& 2|u(T,\vb{x})-v(T,\vb{x})|- C |v(T,\vb{y})-v(s,\vb{y})|\\&-& C \left[|r(\vb{x})-r(\vb{y})|+ |v(T,\vb{y})-v(T,\vb{x})|\right] \\ &-&C \left[ |r(\vb{x})-r(\vb{y})|+|u(T,\vb{x})-u(s,\vb{x})|\right] \\
&-& |v(T,\vb{y})-v(T,\vb{x})|\\
&\geq & 2|u(T,\vb{x})-v(T,\vb{x})|-C \Big[ |v(T,\vb{y})-v(s,\vb{y})|\\&+&|r(\vb{x})-r(\vb{y})|+ |v(T,\vb{y})-v(T,\vb{x})|\\&+&|u(T,\vb{x})-u(s,\vb{x})|\Big].
\end{eqnarray*}

Thus \begin{eqnarray}\label{estA}
\nonumber\mathcal{A}&=&\int_{0}^T w_{\epsilon}(T-s)\int_{\R^{2d}}\Big(|u(t,\vb{x})-\tilde{v}(s,\vb{y},\vb{x})|+|v(T,\vb{y})-\tilde{v}(t,\vb{x},\vb{y})|\Big)w_{\eta}( \vb{x}-\vb{y})d\vb{y}  d\vb{x} ds \\\nonumber
&\geq&  \int_{0}^T w_{\epsilon}(T-s)\int_{\R^{2d}} 2|u(T,\vb{x})-v(T,\vb{x})|-C\Big(|v(T,\vb{y})-v(s,\vb{y})|+|r(\vb{x})-r(\vb{y})|\\&&\quad \quad  +|v(T,\vb{y})-v(T,\vb{x})|+|u(T,\vb{x})-u(s,\vb{x})|\Big)w_{\eta}( \vb{x}-\vb{y})d\vb{y}  d\vb{x} ds.
\end{eqnarray}

To obtain the desired lower bound on $\mathcal{A},$ we estimate each term on the right side of \eqref{estA} as follows:
\begin{enumerate}[label=\textbf{Term} \roman*]
\item $\int_{0}^T \left[w_{\epsilon}(T-s)\int_{\R^{2d}}\Big( |u(T,\vb{x})-v(T,\vb{x})|\Big)w_{\eta}( \vb{x}-\vb{y})d\vb{y}  d\vb{x} \right] ds. $ \\ \newline By symmetry of $w$ we have \begin{eqnarray*}\int_{0}^T\omega_{\epsilon}(T-s)ds=\int_{0}^T\omega_{\epsilon}(s)ds=\frac{1}{2},\end{eqnarray*} Now applying Fubini-Tonellis's theorem we get,
\begin{eqnarray*}\int_{0}^T \left[w_{\epsilon}(T-s)\int_{\R^{2d}}\Big( |u(T,\vb{x})-v(T,\vb{x})|\Big)w_{\eta}( \vb{x}-\vb{y})d\vb{y}  d\vb{x} \right] ds = \frac{1}{2} \norma{u(T,\cdot)-v(T,\cdot)}_{L^1(\R^d)}.
\end{eqnarray*}
\item $\int_{0}^T w_{\epsilon}(T-s)\int_{\R^{2d}}\Big( |v(T,\vb{y})-v(s,\vb{y})|\Big)w_{\eta}( \vb{x}-\vb{y})d\vb{y}  d\vb{x} ds.  $\\ \newline   Since the support of $w_{\epsilon} \subset[-\epsilon,\epsilon],$ using the time continuity of $v$ we get,\begin{eqnarray*}\int_{0}^T w_{\epsilon}(T-s)\int_{\R^{2d}}\Big( |v(T,\vb{y})-v(s,\vb{y})|\Big)w_{\eta}( \vb{x}-\vb{y})d\vb{y}  d\vb{x} ds \leq \frac{1}{2} L \epsilon.
\end{eqnarray*}

\item $\int_{0}^T w_{\epsilon}(T-s)\int_{\R^{2d}}\Big( |r(\vb{x})-r(\vb{y})|\Big)w_{\eta}( \vb{x}-\vb{y})d\vb{y}  d\vb{x} ds. $\\ \newline 
Note that,\begin{eqnarray*}
   \int_{\R^{2d}}\omega_{\eta}( \vb{x}-\vb{y})|r(\vb{x})-r(\vb{y})| d\vb{x}d\vb{y}
      \leq |\eta| \TV(r),
     \end{eqnarray*}and thus we have,
\begin{eqnarray*}\int_{0}^T w_{\epsilon}(T-s)\int_{\R^{2d}}\Big( |r(\vb{x})-r(\vb{y})|\Big)w_{\eta}( \vb{x}-\vb{y})d\vb{y}  d\vb{x} ds \leq \frac{1}{2} \TV(r)|\eta|.
\end{eqnarray*}
\item $\int_{0}^T w_{\epsilon}(T-s)\int_{\R^{2d}}\Big( |v(T,\vb{x})-v(T,\vb{y})|\Big)w_{\eta}( \vb{x}-\vb{y})d\vb{y}  d\vb{x} ds.$ \\ \newline Since $v(T,\cdot)$ has bounded variation, repeating the arguments as in the previous step, we get,  \begin{eqnarray*}
\int_{0}^T w_{\epsilon}(T-s)\int_{\R^{2d}}\Big(|v(T,\vb{y})-v(T,\vb{x})| \Big)w_{\eta}( \vb{x}-\vb{y})d\vb{y}  d\vb{x} ds  \leq  \frac{1}{2} \TV(v)||\eta|.
\end{eqnarray*}
\item $\int_{0}^T w_{\epsilon}(T-s)\int_{\R^{2d}}\Big(|u(T,\vb{x})-u(s,\vb{x})| \Big)w_{\eta}( \vb{x}-\vb{y})d\vb{y}  d\vb{x} ds. $ \\ \newline Note that $w_{\epsilon}(T-s)$ is zero for  $T-s > \epsilon.$ Thus invoking the definition of $\nu(u,\epsilon)$ we get,\begin{eqnarray*}\int_{0}^T w_{\epsilon}(T-s)\int_{\R^{2d}}\Big(|u(T,\vb{x})-u(s,\vb{x})| \Big)w_{\eta}( \vb{x}-\vb{y})d\vb{y}  d\vb{x} ds \leq \frac{1}{2}\nu(u, \epsilon).
\end{eqnarray*}
Combining all these estimates, we get the desired lower bound on $\mathcal{A}$.
\end{enumerate}
\item We have the following upper bound on $\mathcal{B}.$\begin{eqnarray} \label{B}
\mathcal{B} \leq  \norma{u(\cdot,0)-v(\cdot,0)}_{L^1(\R^d)}+  C \Big( L\epsilon + \TV(r)|\eta|+\TV(v) |\eta|+\nu(u, \epsilon) \Big).
\end{eqnarray}
Claim follows by repeating the arguments done in the estimation of $\mathcal{A},$
 for $|u_0(\vb{x})-\tilde{v}(s,\vb{y},\vb{x})|+|v_0(\vb{y})-\tilde{v}(t,\vb{x},\vb{y})|.$ 

\item\begin{eqnarray}\label{C}\mathcal{C}=0.\end{eqnarray}

By the definition of $\tilde{u}$ and $\tilde{v}$ we have,
\begin{eqnarray*}
\beta(\vb{x},u(t,\vb{x}))=a u(t,\vb{x})+r(\vb{x})=a \tilde{u}(t,\vb{x},\vb{y})+r(\vb{y})=\beta(\vb{y},\tilde{u}(t,\vb{x},\vb{y})),\\
\beta(\vb{x},\tilde{v}(s,\vb{y},\vb{x}))=a \tilde{v}(s,\vb{y},\vb{x})+r(\vb{x})=a v(s,\vb{y})+r(\vb{y})=\beta(\vb{y},v(s,\vb{y})).
\end{eqnarray*}

Which implies
\begin{eqnarray*}
u(t,\vb{x})-\tilde{v}(s,\vb{y},\vb{x})=\tilde{u}(t,\vb{x},\vb{y})-v(s,\vb{y}),
\end{eqnarray*}and hence
\begin{eqnarray*}
\abs{u(t,\vb{x})-\tilde{v}(s,\vb{y},\vb{x})}=\abs{\tilde{u}(t,\vb{x},\vb{y})-v(s,\vb{y})}.
\end{eqnarray*}
Thus we have $\mathcal{C}=0$ and claim is proved.

\end{enumerate}

Substituting the values of \eqref{A}-\eqref{C}  in \eqref{ABC}, we have
\begin{eqnarray*}
\norma{u(T,\cdot)-v(T,\cdot)}_{L^1(\R^d)} \leq  \norma{u_0-v_0}_{L^1(\R^d)} + C \Big[L\epsilon +\TV(r)|\eta|+\TV(v)|\eta|+ \nu(u, \epsilon)\Big]-\wedge_{\eta,\epsilon}(u,v).
\end{eqnarray*}
which completes the proof of the lemma.
\end{proof}
\begin{remark}\label{kuz}The terms involving $\TV(r)$ are absent in the original Kuznetsov lemma where the flux is homogeneous.
\end{remark}
Before moving on to the proof of the error estimate, we introduce the following notations:
\begin{eqnarray*}
\eta_{ij}^{n/2}&:=&\abs{u_{ij}^{n/2}-k^{ij}_{\alpha}},\\
p_{ij}^{n/2}&:=&\sgn(u^{n/2}_{ij}-k_{\alpha}^{ij}) 
\big(A_1(x_i,y_j,u^{n/2}_{ij})-A_1(x_i,y_j,k_{\alpha}^{ij})\big) =A_1(x_i,y_j,u^{n/2}_{ij}\lor k_{\alpha}^{ij})-A_1(x_i,y_j,u^{n/2}_{ij} \wedge k_{\alpha}^{ij}),\\[2mm]
q_{ij}^{n/2}&:=&\sgn(u^{n/2}_{ij}-k_{\alpha}^{ij}) \big(A_2(x_i,y_j,u^{n/2}_{ij})-A_2(x_i,y_j,k_{\alpha}^{ij})\big) =A_2(x_i,y_j,u^{n/2}_{ij}\lor k_{\alpha}^{ij})-A_2(x_i,y_j,u^{n/2}_{ij} \wedge k_{\alpha}^{ij}).
\end{eqnarray*}Now we state and prove the convergence rate theorem.
\begin{theorem}\label{CR} [Convergence rate for conservation laws with discontinuous flux]
Let $u$ be the entropy solution of \eqref{eq:discont}-\eqref{eq:data} and $u^{\D}$ the numerical solution given by \eqref{scheme_A}-\eqref{def_bar_A_direct}.
Then we have the following convergence rate:
 \begin{eqnarray*}\norma{u^{\D}(T,\cdot)-v(T,\cdot)}_{L^1(\R^d)} &=& \mathcal{O}(\sqrt{\Delta t}),
 \end{eqnarray*}
for some constant $C$ independent of $\Delta t$.
\end{theorem}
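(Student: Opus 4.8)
Let $v$ denote the adapted entropy solution of \eqref{eq:discont}--\eqref{eq:data} (so $v_0=u_0$). The plan is to apply the Kuznetsov-type estimate of Lemma~\ref{kuznetsov_adapted} with the numerical approximation $u^{\D}$ in the role of the competitor $u$, and then to choose the mollification parameters $\eta,\epsilon$ optimally; by the CFL condition \eqref{cfl_multi} the mesh ratios are fixed, so $\D x,\D y,\D t$ are all comparable and I abbreviate their common order by $\D t$. First I would check that $u^{\D}\in\kappa$: Lemma~\ref{lemma:A_u_bound} gives $\norma{u^{\D}(\cdot,t)}_{L^{\infty}}\le\mM$, while Lemma~\ref{lemma:TV_u_multi} and Theorem~\ref{theorem:conv_multi} give $\TV(u^{\D}(\cdot,t))\le\mK_6$, both uniformly in $\D$ and $t$. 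Lemma~\ref{kuznetsov_adapted} then yields, for all $0<\epsilon<T$ and $\eta>0$,
\begin{equation}\label{pf:kuz}
\norma{u^{\D}(\cdot,T)-v(\cdot,T)}_{L^1(\R^d)}\le\norma{u^{\D}_0-u_0}_{L^1(\R^d)}+C\bigl[L\epsilon+\bigl(\TV(r)+\TV(v)\bigr)\eta+\nu(u^{\D},\epsilon)\bigr]-\wedge_{\eta,\epsilon}(u^{\D},v),
\end{equation}
with $C$ independent of $\D$. In this bound $\TV(v)$ is finite and $\D$-independent by Theorem~\ref{theorem:conv_multi}; $\norma{u^{\D}_0-u_0}_{L^1(\R^d)}\le C\,\TV(u_0)\,\D t$ since $u^{\D}_0$ is a grid sampling of the $\BV$ datum; and the discrete time-continuity estimate \eqref{time_cont_multi} bounds the $L^1$ variation of $u^{\D}$ over one time step by $C\D t$, so summing over at most $\lceil\epsilon/\D t\rceil+1$ steps gives $\nu(u^{\D},\epsilon)\le C(\epsilon+\D t)$. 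Hence the right-hand side of \eqref{pf:kuz} is $\le C(\D t+\epsilon+\eta)-\wedge_{\eta,\epsilon}(u^{\D},v)$.

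The core of the proof is an upper bound for the entropy residual $-\wedge_{\eta,\epsilon}(u^{\D},v)$, i.e.\ a Kuznetsov-type consistency estimate for the Godunov scheme relative to the adapted entropy formulation. By definition $\wedge_{\eta,\epsilon}(u^{\D},v)=\int_{\Pi_T}\wedge_T\bigl(u^{\D},\Phi^{\eta,\epsilon}(\cdot,\cdot,s,\vb y),\tilde v(s,\vb y,\cdot)\bigr)\,d\vb y\,ds$, and for each fixed $(s,\vb y)$ the relevant level is the \emph{constant} $\alpha=\beta(\vb y,v(s,\vb y))$, whose grid values $k^{\alpha}_{ij}=\beta^{-1}(x_i,y_j,\alpha)=\frac{\alpha-r_{ij}}{a}$ agree with $k_{\alpha}(\cdot)$ on $C_{ij}$ up to $\tfrac1a\abs{r(\vb x)-r_{ij}}$ by \descref{C2}{C-2}. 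I would multiply the discrete adapted entropy inequalities \eqref{dis_ad_multi_1}--\eqref{dis_ad_multi_2} by the cell average of $\Phi^{\eta,\epsilon}(\cdot,\cdot,s,\vb y)$, sum over all cells and time levels, perform a summation by parts in space and time, and compare the result with $\wedge_T\bigl(u^{\D},\Phi^{\eta,\epsilon}(\cdot,\cdot,s,\vb y),k_{\alpha}\bigr)$; using the consistency and Lipschitz continuity of the Godunov fluxes $\bar g_l$, the monotonicity of the scheme under \eqref{cfl_multi}, and the uniform $L^{\infty}$ and $\BV$ bounds on $u^{\D}$, this should produce
\begin{equation}\label{pf:resid}
-\wedge_{\eta,\epsilon}(u^{\D},v)\le C\Bigl(\D t+\frac{\D t}{\eta}+\frac{\D t}{\epsilon}\Bigr),
\end{equation}
with $C$ depending only on $T$, $a$, the Lipschitz constant of $\vb g$ on the relevant range, and $\TV(u_0),\TV(r)$, not on the mesh. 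The features absent from the classical homogeneous-flux Kuznetsov estimate are the spatial dependence of the adapted level $k_{\alpha}(\cdot)$ and of the flux $\vb A(\cdot,\cdot)$, whose contributions are absorbed into the $\TV(r)$-terms (cf.\ Remark~\ref{kuz}); in several dimensions one additionally sums the two directional half-step inequalities, the splitting error being controlled through the $\TVD$-in-$\beta$ property of Lemma~\ref{beta_tvd_multi} that underlies the uniform $\BV$ bound.

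Combining \eqref{pf:kuz} and \eqref{pf:resid} gives $\norma{u^{\D}(\cdot,T)-v(\cdot,T)}_{L^1(\R^d)}\le C\bigl(\D t+\epsilon+\eta+\tfrac{\D t}{\eta}+\tfrac{\D t}{\epsilon}\bigr)$, and the balanced choice $\epsilon=\eta=\sqrt{\D t}$ makes every term $\mathcal O(\sqrt{\D t})$, which is the asserted rate. I expect the main obstacle to be precisely the consistency estimate \eqref{pf:resid}: passing from the \emph{discrete} adapted entropy inequalities to the \emph{continuous} entropy-defect functional $\wedge_{\eta,\epsilon}$ while (i) keeping the constant independent of the mesh, (ii) retaining the sharp $\D t/\eta$ and $\D t/\epsilon$ dependence on which the exponent $1/2$ relies, and (iii) correctly generating and estimating the $\TV(r)$ corrections produced by the spatially varying adapted level and by the spatial discontinuities of the flux — a feature with no counterpart in the classical Kuznetsov theory.
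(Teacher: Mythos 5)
Your proposal is correct and follows essentially the same route as the paper: apply the adapted-entropy Kuznetsov lemma with $u^{\D}$ as the competitor, control $\nu(u^{\D},\epsilon)$ by the discrete time-continuity estimate, bound the residual $-\wedge_{\eta,\epsilon}(u^{\D},v)$ by testing the discrete entropy inequalities \eqref{dis_ad_multi_1}--\eqref{dis_ad_multi_2} against the mollified test function, summing by parts, and using the uniform $\BV$ bounds together with the $O(\D t)$ discrepancy between $k_{\alpha}$ and $k^{\D}_{\alpha}$, then balance $\eta=\epsilon=\sqrt{\D t}$. The only difference is presentational: you keep $\eta,\epsilon$ general and state the residual bound $C(\D t+\D t/\eta+\D t/\epsilon)$ as a target, whereas the paper fixes $\eta=\epsilon=\sqrt{\D t}$ from the outset and verifies exactly this estimate through its explicit $\mG_l^{\phi}$/$K_l^{ijn}$ decomposition, so the step you flag as the main obstacle is the computation the paper carries out in detail.
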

\begin{proof}
Proof is in the spirit of the error estimates for one dimensional  scalar conservation laws with  space independent fluxes presented in \cite{HRbook}. We prove the result for $d=2$ and the proof follows similarly for higher dimensions. Let $\eta=\epsilon=\sqrt{\D t}.$  In view of the previous lemma, it is enough to show the following:
\begin{eqnarray}\label{nu}
\nu(u^{\D},\sqrt{\D t}) &=&  \mathcal{O}(\sqrt{\D t}), \\\label{wedge}
-\wedge_{\sqrt{\D t},\sqrt{\D t}}(u,v)& =&\mathcal{O}(\sqrt{\D t}) .
\end{eqnarray}
\eqref{nu} follows from the time estimates \eqref{time_cont_multi}.
Now we prove \eqref{wedge}. Let $(x,y)\in \R^2$  and $u^{\D}$ be a piecewise constant function obtained by the numerical scheme.  Consider, 
    \begin{eqnarray*}
    -\wedge^{\D}_T(u^{\Delta},\phi,k^{\D}_{\alpha})
 &=& -\sum\limits_{n=0}^{N-1} \Bigg(\displaystyle\sum_{ij} \Bigg[ \int\limits_{C_i} \int\limits_{C_j} \int\limits_{C^{n}} \left(  \eta_{ij}^n \phi_t(x,y,s)+p_{ij}^n\phi_x(x,y,s)+q_{ij}^n\phi_y(x,y,s)\right) ds dy dx \nonumber\\
    & -&\displaystyle  \int\limits_{C_i} \int\limits_{C_j} \int\limits_{C^{n+1/2}} \left(  \eta_{ij}^{n+1/2} \phi_t(x,y,s)+p_{ij}^{n+1/2}\phi_x(x,y,s)+q_{ij}^{n+1/2}\phi_y(x,y,s)\right) ds dy dx\Bigg] \nonumber\\
    &-&  \int\limits_{C_i} \int\limits_{C_j} \eta_{ij}^0 \phi(x,y,0) dy dx + \sum_{ij}  \int\limits_{C_i} \int\limits_{C_j} \eta_{ij}^N \phi(x,y,T) dy dx \Bigg).
    \end{eqnarray*}  Fundamental theorem of calculus followed by summation by parts imply, $-\wedge^{\D}_T(u^{\Delta},\phi,k^{\D}_{\alpha})$
    \begin{eqnarray*}
    &=& \displaystyle\sum_{ij} \sum\limits_{n=0}^{N-1} \Bigg[ \left( \eta_{ij}^{n+1/2}-\eta_{ij}^{n}\right)  \int\limits_{C_i} \int\limits_{C_j} \phi (x,y,t^{n+1/2})dx dy + \left( p_{ij}^n -p_{i-1,j}^{n} \right)  \int\limits_{C_j}\int\limits_{C^{n}} \phi(x_{i-\frac{1}{2}},y,s) dy ds\\ && \quad \quad \quad \quad \quad\quad \quad \quad \quad \quad \quad \quad \quad \quad \quad\quad \quad \quad \quad \quad+\left( q_{ij}^n -q_{ij-1}^{n} \right)  \int\limits_{C_i}\int\limits_{C^{n}} \phi(x,y_{j-\frac{1}{2}},s)ds dx\Bigg]\\
    &+& \displaystyle\sum_{ij} \sum\limits_{n=0}^{N-1} \Bigg[ \left( \eta_{ij}^{n+1}-\eta_{ij}^{n+1/2}\right)  \int\limits_{C_i} \int\limits_{C_j} \phi (x,y,t^{n+1})dy dx + \left( p_{ij}^{n+1/2} -p_{i-1,j}^{n+1/2} \right)  \int\limits_{C_j}\int\limits_{C^{n+1/2}} \phi(x_{i-\frac{1}{2}},y,s)ds dy\\&& \quad \quad \quad \quad \quad\quad \quad \quad \quad \quad \quad \quad \quad \quad \quad\quad \quad \quad \quad \quad+ \left( q_{ij}^{n+1/2} -q_{ij-1}^{n+1/2} \right)  \int\limits_{C_i}\int\limits_{C^{n+1/2}} \phi(x,y_{j-\frac{1}{2}},s)ds dx\Bigg].
    \end{eqnarray*}
Using the discrete entropy inequalities \eqref{dis_ad_multi_1}-\eqref{dis_ad_multi_2} in the above equation,  $-\wedge^{\D}_T(u^{\Delta},\phi,k^{\D}_{\alpha})$
 \begin{eqnarray*}
    &\leq& \displaystyle \sum_{ij} \sum\limits_{n=0}^{N-1} \Bigg[ \lambda\left( \mP^n_{i+\frac{1}{2},j}-\mP^n_{i-\frac{1}{2},j}\right)  \int\limits_{C_i} \int\limits_{C_j} \phi (x,y,t^{n+1/2})dx dy \\&+& \left( p_{ij}^n -p_{i-1,j}^{n} \right)  \int\limits_{C_j}\int\limits_{C^{n}} \phi(x_{i-\frac{1}{2}},y,s)ds dy + \left( q_{ij}^n -q_{ij-1}^{n} \right)  \int\limits_{C_i}\int\limits_{C^{n}} \phi(x,y_{j-\frac{1}{2}},s)ds dx\Bigg]\\
    &+& \displaystyle\sum_{ij} \sum\limits_{n=0}^{N-1} \Bigg[\lambda \left( \mQ^{n+1/2}_{i,j+\frac{1}{2}}-\mQ^{n+1/2}_{i,j-\frac{1}{2}}\right)  \int\limits_{C_i} \int\limits_{C_j} \phi (x,y,t^{n+1})dy dx \\&+&\left( p_{ij}^{n+1/2} -p_{i-1,j}^{n+1/2} \right)  \int\limits_{C_j}\int\limits_{C^{n+1/2}} \phi(x_{i-\frac{1}{2}},y,s)ds dy + \left( q_{ij}^{n+1/2} -q_{ij-1}^{n+1/2} \right)  \int\limits_{C_i}\int\limits_{C^{n+1/2}} \phi(x,y_{j-\frac{1}{2}},s)ds dx \Bigg],
    \end{eqnarray*}
which on rearrangement implies that  $-\wedge^{\D}_T(u^{\Delta},\phi,k^{\D}_{\alpha})$
\begin{eqnarray*}
 &\leq& 
   \displaystyle\sum_{ij} \sum\limits_{n=0}^{N-1} \Bigg[ \lambda_x  \abs{\mP^n_{i+\frac{1}{2},j} -p_{ij}^{n}}  \int\limits_{C_i} \int\limits_{C_j} \abs{\phi(x+\Delta x,y,t^{n+1/2})-\phi(x,y,t^{n+1/2}) }dy dx\\&+&  \displaystyle\abs{p_{ij}^n-p_{i-1,j}^{n}} \abs{\int\limits_{C^{n}} \int\limits_{C_j}  \phi(x_{i-\frac{1}{2}},y,s) dy ds-\lambda_x \int\limits_{C_j}  \int\limits_{C_i}\phi(x,y,t^{n+1/2})dx dy }
    \\&+&
       \displaystyle\lambda_y  \abs{\mQ^{n+1/2}_{i,j+\frac{1}{2}} -q_{ij}^{n+1/2} }\int\limits_{C_i} \int\limits_{C_j} \abs{\phi(x,y+\Delta y,t^{n+1})-\phi(x,y,t^{n+1}) } dy dx
\\&+&  \displaystyle\left|q_{ij}^{n+1/2}-q_{i,j-1}^{n+1/2}\right| \left| \int\limits_{C_i}\int\limits_{C^{n+1/2}} \phi(x,y_{j-\frac{1}{2}},s)ds dx -\lambda_y  \int\limits_{C_i} \int\limits_{C_j}\phi(x,y,t^{n+1}) dy dx\right|\\&+&   \displaystyle \left( q_{ij}^n -q_{ij-1}^{n} \right)  \int\limits_{C_i}\int\limits_{C^{n}} \abs{\phi(x,y_{j-\frac{1}{2}},s)}ds  dx +\left| p_{ij}^{n+1/2} -p_{i-1,j}^{n+1/2} \right|  \int\limits_{C_j}\int\limits_{C^{n+1/2}} \abs{\phi(x_{i-\frac{1}{2}},y,s)}ds dy\Bigg].
     \end{eqnarray*}
Adding and subtracting \begin{eqnarray*}
&&\lambda_x\int_{C_i}\phi(x_{i-\frac{1}{2}},y,t^{n+1/2}) dx=\int_{C^n}\phi(x_{i-\frac{1}{2}},y,t^{n+1/2}) dt\\
\text{and }&&\lambda_y\int_{C_j}\phi(x,y_{j-\frac{1}{2}},t^{n+1}) dx=\int_{C^{n+1/2}}\phi(x,y_{j-\frac{1}{2}},t^{n+1}) dt   \end{eqnarray*} in the terms \begin{eqnarray*}
&&\left|\displaystyle\int_{C^{n}}\phi(x_{i-\frac{1}{2}},y,s)ds-\lambda_x\int_{C_i}\phi(x,y,t^{n+1/2})dx\right|\\
\text{and } &&
\left|\displaystyle\int_{C^{n+1/2}}\phi(x,y_{j-\frac{1}{2}},s)ds-\lambda_y\int_{C_j}\phi(x,y,t^{n+1})dy\right|
\end{eqnarray*} respectively, we get
\begin{eqnarray*}
     -\wedge^{\D}_T(u^{\D},\phi,k^{\D}_{\alpha})&\leq& 
   \displaystyle\sum_{ij} \sum\limits_{n=0}^{N-1} \Bigg[\lambda_x \mG_1^{\phi} \abs{\mP^n_{i+\frac{1}{2},j} -p_{ij}^{n}} +\left|p_{ij}^n-p_{i-1,j}^{n}\right|  \left(\mG_2^{\phi}+\lambda_x \mG_3^{\phi}\right)\\&&+ 
   \lambda_y G_4^{\phi} \left|\mQ^{n+1/2}_{i,j+\frac{1}{2}} -q_{ij}^{n+1/2}\right| +\left(q_{ij}^{n+1/2}-q_{i,j-1}^{n+1/2}\right) (\mG_5^{\phi}+\lambda_y \mG_6^{\phi})
    \\&&+ \left( q_{ij}^n -q_{ij-1}^{n} \right) \mG_7^{\phi}+\left( p_{ij}^{n+1/2} -p_{i-1,j}^{n+1/2} \right) \mG_8^{\phi}\Bigg]\\&:=& 
    \displaystyle\sum_{ij} \sum\limits_{n=0}^{N-1} \Bigg[\mG_1^{\phi}K_1^{ijn}+\mG_2^{\phi}K_2^{ijn}+\ldots+\mG_8^{\phi}K_8^{ijn}\Bigg].
\end{eqnarray*}
where,
 \begin{eqnarray*}
\mG_1^{\phi}&=& \int\limits_{C_i} \int\limits_{C_j} \phi(x+\Delta x,y,t^{n+1/2})-\phi(x,y,t^{n+1/2}) dy dx,\\
\mG_2^{\phi}&=& \int\limits_{C_i}\int\limits_{C^{n}} \abs{\phi(x_{i-1/2},y,t)-\phi(x_{i-1/2},y,t^{n+1/2})} ds dx,\\
\mG_3^{\phi}&=&  \int\limits_{C_i} \int\limits_{C_j} \abs{\phi(x,y,t^{n+1/2})-\phi(x_{i-1/2},y,t^{n+1/2})}dy dx,\\
\mG_4^{\phi}&=& \int\limits_{C_i} \int\limits_{C_j} \phi(x,y+\D y ,t^{n+1})-\phi(x,y,t^{n+1}) dy dx, \\
\mG_5^{\phi}&=& \int\limits_{C_j}\int\limits_{C^{n+1/2}} \abs{\phi(x_{i-1/2},y,t)-\phi(x_{i-1/2},y,t^{n+1})} ds dy, \\
\mG_6^{\phi}&=&  \int\limits_{C_i} \int\limits_{C_j} \abs{\phi(x,y,t^{n+1})-\phi(x,y_{j-1/2},t^{n+1})} dy dx,\\
\mG_7^{\phi}&=& \int\limits_{C_i}\int\limits_{C^{n}} \phi(x,y_{j-\frac{1}{2}},s)ds dx,\\
\mG_8^{\phi}&=& \int\limits_{C_j}\int\limits_{C^{n+1/2}} \phi(x_{i-\frac{1}{2}},y,s) dy ds.
\end{eqnarray*}For each $(\overline{x},\overline{y},s)\in \Pi_T,$ consider the test function $\phi(x,y,t):=\Phi^{\sqrt{\D t},\sqrt{\D t}}(x,y,t,\overline{x},\overline{y},s)$ and $\alpha=\beta(\overline{x},\overline{y},v(\overline{y},s)).$

Using the properties of $\Phi^{\sqrt{\D t},\sqrt{\D t}},$ the following estimate can be obtained (see \cite{HRbook} for the details). 

\begin{eqnarray}\label{Gl}
\displaystyle \int\limits_{\Pi_T}\displaystyle \mG_l^{\Phi^{\sqrt{\D t},\sqrt{\D t}}(\cdot,\cdot,\cdot,\overline{x},\overline{y},s)}d\overline{x} d\overline{y} ds=\mathcal{O}({\Delta t})^{5/2}, \quad l=1,2,\ldots,8.
\end{eqnarray}

Our assumptions on the flux \eqref{C1}-\eqref{C2} imply the following:
\begin{eqnarray*}
\label{pp}|p_{ij}^{n/2}-p_{i-1,j}^{n/2}| &\leq& C \left[|u_{ij}^{n/2}-u_{i-1,j}^{n/2}|+ |r_{ij}-r_{i-1,j}| \right],\\
\label{pP}|p_{ij}^n-\mP^n_{i,j+\frac{1}{2}}| &\leq& C\sum_{k=-1}^{1}|u_{i+k,j}^{n/2}-u_{ij}^{n/2}|,\\
\label{qq}|q_{ij}^{n/2}-q_{i,j-1}^{n/2}| &\leq& C \left[|u_{ij}^{n/2}-u_{i,j-1}^{n/2}|+ |r_{ij}-r_{i,j-1}| \right],\\
\label{qQ}|q_{ij}^{n/2}-\mQ^{n/2}_{i,j+\frac{1}{2}}| &\leq& C\sum_{k=-1}^{1}|u_{i,j+k}^{n/2}-u_{ij}^{n/2}|.
\end{eqnarray*}
Since the numerical approximations are uniformly total variation bounded, the above inequalities imply that, $\D t\sum\limits_{ij}K_l^{ijn}$ is uniformly bounded for $l\in \{1,2,\ldots,8\},$  $n=0,1,2,\ldots,N-1, \alpha \in \R \text{ and } \D >0.$

Now \eqref{Gl} implies the following
\begin{eqnarray}
\nonumber \Lambda^{\D}_{\sqrt{\D t}, \sqrt{\D t}}(u^{\D},v) &=& \int_{\Pi_T}\wedge^{\D}_T(u^{\D},\Phi^{{\sqrt{\D t}, \sqrt{\D t}}}(\cdot,\cdot,\cdot,\overline{x},\overline{y},s),k^{\D}_{\beta(y,v(y,s))})\\\nonumber&=&  \left(\sum\limits_{ij} \sum\limits_{n=0}^{N-1} K_l^{ijn}   \right)
\int_{\Pi_T} G_l^{\Phi^{\sqrt{\D t},\sqrt{\D t}}(\cdot,\cdot,\cdot,\overline{x},\overline{y},s)}d\overline{x} d\overline{y} ds \\\label{wedgeD}
&=& \mathcal{O}(\D t^{-2}) \mathcal{O}(\D t^{5/2})= \mathcal{O}(\sqrt{\D t}).
\end{eqnarray}

Note that,\begin{eqnarray*}
\wedge^{\D}_T(u^{\D},\phi,k^{\D}_{\alpha})&=&
    \int_{\Pi_T}\Big( |u^{\D}(t,\vb{x})-k^{\D}_\alpha(\vb{x})|\phi_{t}\\&&+\sum\limits_{i=1}^2\sgn (u^{\D}(t,\vb{x})-k^{\D}_\alpha(\vb{x})) \Big(g_i(\beta^{\D}(\vb{x},u^{\D}(t,\vb{x})))-\alpha\Big)\phi_{x_i}\Big)  d\vb{x} d{t} \\&&-\int_{\R^2}|u^{\D}(T,\vb{x})-k^{\D}_\alpha(\vb{x})|\phi(T,\vb{x}) d\vb{x}+\int_{\R^2}|u_0(\vb{x})-k^{\D}_\alpha(\vb{x})|\phi(0,\vb{x}) d\vb{x}.
\end{eqnarray*}Thus, we have,
\begin{eqnarray*}\label{lambdelta}
\abs{\Lambda_{\sqrt{\D t}, \sqrt{\D t}}(u^{\D},v)-\Lambda^{\D}_{\sqrt{\D t}, \sqrt{\D t}}(u^{\D},v)} &\leq& C\Bigg(\int\limits_{{\Pi_T}^2} \abs{k_{\alpha}-k^{\D }_{\alpha}} \left| \Phi_t^{{\sqrt{\D t}, \sqrt{\D t}}}+\Phi_x^{{\sqrt{\D t}, \sqrt{\D t}}}+\Phi_y^{{\sqrt{\D t}, \sqrt{\D t}}}\right|\\&&\quad\quad+
\int\limits_{\R^4}\abs{k_{\alpha}-k^{\D }_{\alpha}} \left|\Phi^{{\sqrt{\D t}, \sqrt{\D t}}}(\cdot,0)+\Phi^{{\sqrt{\D t}, \sqrt{\D t}}}(\cdot,T)\right|\Bigg).
\end{eqnarray*}
Since, $\norma{k-k^{\D}}_{L^1(\R^2)}=\mathcal{O}(\D t),$ using \eqref{S5} and   \eqref{wedgeD} in the above inequality, we get
\begin{eqnarray*}\Lambda_{\sqrt{\D t}, \sqrt{\D t}}(u^{\D},v)=\Lambda^{\D}_{\sqrt{\D t}, \sqrt{\D t}}(u^{\D},v)+ \mathcal{O}(\sqrt{\D t})=\mathcal{O}(\sqrt{\D t}).\end{eqnarray*}This completes the proof of the theorem.
\end{proof}
\begin{remark}\normalfont
In Theorem~\ref{CR} we proved that the rate of convergence is not less than 1/2. This result has to be considered as the worst case estimate in the sense that rate cannot be less than 1/2.  An example due to Sabac shows that in general this result cannot be improved as the rate $1/2$ is achieved for the example. However the method in many cases exhibits rates much higher than $1/2$ as shown in the next section.
\end{remark}

\section{Numerical Simulations} \label{sec_numerical}
 This section displays the performance of the numerical scheme, for the multidimensional analogues of the Example 4.1 and 4.2 of \cite{GTV_2020_2}. Numerical experiments are performed on the spatial domain $[0,6]\times[0,6]$ with $M=50, 100, 200$ and $400$ uniformly spaced spatial grid points along the $x$ and $y$ directions. It will be seen that the scheme is able to capture the expected solutions well,  as in the case of one dimension.
\begin{example}\label{ex:rare}\normalfont
We consider the IVP \eqref{eq:discont}-\eqref{eq:data} with fluxes as defined below:
\begin{eqnarray*}
A_i(x,y,u)&:=&g_i(u+r(x)), \quad \text{for }i=1,2.
\end{eqnarray*}
\begin{eqnarray}
g_1(u)=u^2/2,\quad g_2(u)=sin(u) \quad \text{ and } \quad
r(x)= 
\begin{cases}
p, \quad & x <1,\\
pq^{n-1}, \quad & x \in C_n,n\in \mathbb{N},\\
 0, \quad & x>a_{\infty},
\end{cases}
\end{eqnarray}
where $p=4, q=0.8$ and for each $n\in \mathbb{N}$, $C_n=[a_n,a_{n+1}]$, with
\begin{eqnarray*}
a_1=1 \text{ and }a_n=1+\sum_{i=1}^{n-1} \tilde{a}_i \text{ for }n\geq 2
\end{eqnarray*}
with
\begin{equation*}
\tilde{a}_n= 
\begin{cases}
pq^{n-1}-pq^n, \quad & \text{ if } n \text{ is odd},\\
pq^{n-2}-pq^{n-1}, \quad & \text{ if } n \text{ is even}.
\end{cases}
\end{equation*}
Define 
and consider a piecewise constant initial data 
\begin{equation}
u_0(x,y)= 
\begin{cases}
-pq, \quad & x <a_2,\\
-pq^n, \quad & x \in C_n \text{ and } n \text{ odd},\\
-pq^{n-2}, \quad & x \in C_n \text{ and } n \text{ even},\\
0, \quad & x>a_{\infty}.
\end{cases} 
\end{equation}At $t=1,$ the solution  is given by, 
\begin{equation}\label{exact}
u(1,x,y)= 
\begin{cases}
-pq, \quad & x <a_2,\\
x-a_{n}-pq^{n-1}, \quad & x \in C_{n} \text{ and } n \text{ odd},\\
x-a_{n+1}-pq^{n-1}, \quad & x \in C_{n} \text{ and } n \text{ even},\\
0, \quad & x>a_{\infty}.
\end{cases} 
\end{equation}
Figure \ref{fig:rare} plots the numerical solutions at the final time $t=1$ for the mesh size $\D x= \D y= 6/200$. It can be seen that the scheme captures both stationary shocks and rarefactions efficiently.
\begin{figure}[H]
 \centering
 \includegraphics[width=\textwidth,keepaspectratio]{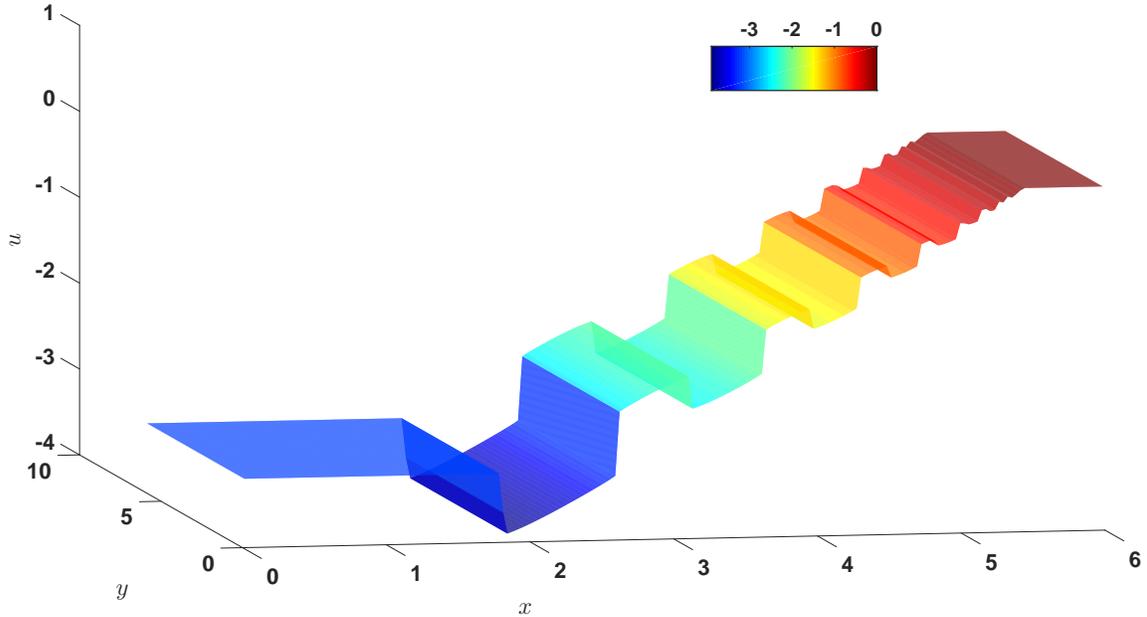}
 \caption{Example~\ref{ex:rare}. The solution at $t=6$ with mesh size $\D x=\D y=6/200.$ Solution contains infinitely many shocks along the spatial discontinuities of the flux, which accumulates along the plane $x = 5.$}
\label{fig:rare}
\end{figure}

\begin{table}[H]
     \centering
    \begin{tabular}{ |c|c|c|c|c|c| } 
     \hline
 M&  $e_{\Delta }$ &$\TV(u^{\Delta }(\cdot,1))$&$\TV(\beta(\cdot,u^{\Delta }(1,\cdot))$\\\hline
 50& 1.3464 & 32.9298&33.9876\\\hline
 100&  0.9618 &34.4796& 35.9166\\\hline
 200&0.6282&37.7934 &40.0374\\\hline
 400&0.4038&39.4704& 41.8296\\\hline
     \end{tabular}
     \caption{ Approximate $L^1$ error and total variation at $t=1$ for Example \ref{ex:rare}.}
   \label{table:1}
 \end{table}
Clearly, the solutions are the extensions of the solutions obtained in the one dimensional case (see Example 4.1, \cite{GTV_2020_2}), more precisely $u(1,x,y)=u(1,x)$, for $(x,y) \in [0,6] \times [0,6].$ Thus, the values listed in the above table are approximately six times of those obtained in the corresponding 1D simulations (see Table 1, \cite{GTV_2020_2}). 
\end{example}
\begin{example}\normalfont\label{ex:shock}
We consider the IVP \eqref{eq:discont}-\eqref{eq:data} with $u_0(x,y)=2$ and fluxes as defined below:
\begin{eqnarray*}
A_i(x,y,u)&:=&g_i(u+r(x)) \quad \text{for }i=1,2
\end{eqnarray*}
where,
\begin{equation*}
g_1(u) = 
\begin{cases}
-u-1, \quad & u <-1,\\
0, \quad & u \in (-1,0),\\
u, \quad & u>1,\\
\end{cases}\quad
g_2(u)=\sin(u) \quad \text{ and } \quad r(x)= 
\begin{cases}
2, \quad & x <1,\\
r_n \chi_{[a_n,a_{n+1}]}(x), \quad & x \in (1,5),\\
1, \quad & x>5,\\
\end{cases}
\end{equation*}
with \begin{eqnarray*}
a_n=5(1-0.8^n),r_n=1-(-0.8)^n.
\end{eqnarray*} 
The flux considered here admits infinitely many spatial discontinuities which accumulates along the plane $x=5.$ Solution at $t=6$ is given by,
\begin{equation*}
u(6,x,y)= r(x) \quad \text{ for } (x,y)\in [0,6]\times[0,6].
\end{equation*}
Figure \ref{fig:shock} plots the numerical solutions at the final time $t=6$ for the mesh size $\D x= \D y= 6/200$. It can be seen that the scheme captures both stationary shocks efficiently.
\begin{figure}[H]
 \centering
 \includegraphics[width=\textwidth,keepaspectratio]{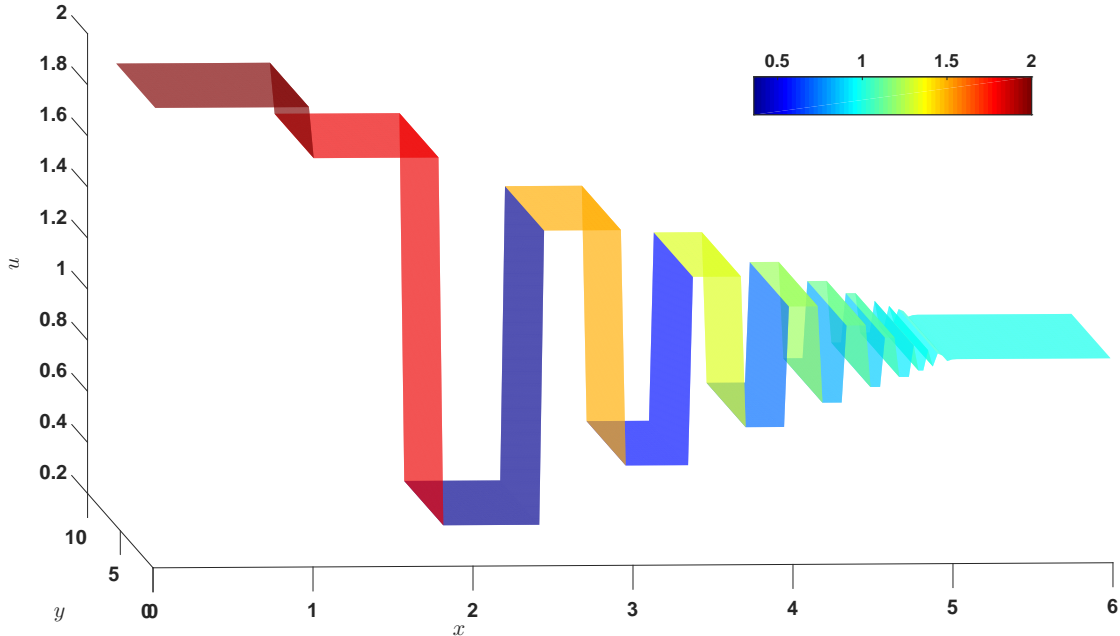}
 \caption{Example~\ref{ex:shock}. The solution at $t=6$ with mesh size $\D x=\D y=6/200.$ Solution contains infinitely many shocks along the spatial discontinuities of the flux, which accumulate along the plane $x = 5.$}
\label{fig:shock}
\end{figure}

\begin{table}[H] 
     \centering
    \begin{tabular}{ |c|c|c|c|c|c| } 
     \hline
 M&  $e_{\Delta }$& $\TV(u^{\Delta }(6,\cdot))$&$\TV(\beta(\cdot,u^{\Delta }(6,\cdot))$\\\hline
 50&2.7933e-02& 40.701& 8.7198e-02\\\hline
 100& 2.559e-03& 41.914& 1.3788e-02\\\hline
 200&1.1147e-04& 43.4088 &  1.07436e-03\\\hline
 400&3.5146e-07&43.6824&  6.3834e-06\\\hline
     \end{tabular}
     \caption{ Approximate $L^1$ error and total variation at $t=6$ for Example \ref{ex:shock}}.
  \label{table2}
 \end{table}
 As in the previous example, the values listed in the above table are approximately six times of those obtained in the corresponding 1D simulation  (see Table 2, \cite{GTV_2020_2}).
\end{example}
\textbf{Acknowledgement.} First and last authors, would like to thank Department of Atomic Energy, Government of India, under project no. \text{12-R\&D-TFR-5.01-0520.} First author would also like to acknowledge Inspire faculty-research grant \text{DST/INSPIRE/04/2016/000237}. 


\end{document}